\newcommand{\map}[3]{#1\colon #2 \to #3}   
\newtheorem{theorem}{Theorem}[section]
\newtheorem{corollary}[theorem]{Corollary}
\newtheorem{lemma}[theorem]{Lemma}
\newtheorem{proposition}[theorem]{Proposition}
\newtheorem{remark}[theorem]{Remark}
\newtheorem{definition}{Definition}
\newtheorem{assumption}{Assumption}
\title{Accelerated Gradient Dynamics on Riemannian Manifolds: Faster Rate and Trajectory Convergence}
\author{
    \textbf{Tejas Natu}$^\ast$\\
	 Mathematics and Computer Science\\
	Saarland University\\
	Germany
    \and
	\textbf{Camille Castera}\\
	Department of Mathematics\\
	University of Tübingen\\
	Germany
	\and
	\textbf{Jalal Fadili}\\
	ENSICAEN
	\\ Normandie Universit\'e
	\\CNRS, GREYC, France
	\and
	\textbf{Peter Ochs}\\
	 Mathematics and Computer Science\\
	Saarland University\\
	Germany
}
\begin{document}

	\maketitle

	\begin{abstract}

        In order to minimize a differentiable geodesically convex function, we study a second-order dynamical system on Riemannian manifolds with an asymptotically vanishing damping term of the form \(\alpha/t\). For positive values of \(\alpha\), convergence rates for the objective values and convergence of trajectory is derived. We emphasize the crucial role of the curvature of the manifold for the distinction of the modes of convergence. There is a clear correspondence to the results that are known in the Euclidean case. When \(\alpha\) is larger than a certain constant that depends on the curvature of the manifold, we improve the convergence rate of objective values compared to the previously known rate and prove the convergence of the trajectory of the dynamical system to an element of the set of minimizers. For \(\alpha\) smaller than this curvature-dependent constant, the best known sub-optimal rates for the objective values and the trajectory are transferred to the Riemannian setting. We present computational experiments that corroborate our theoretical results.
	\end{abstract}

	\renewcommand*{\thefootnote}{$^\ast$}
	\footnotetext[1]{Corresponding author: \texttt{natu@math.uni-sb.com}}
	\renewcommand*{\thefootnote}{\arabic{footnote}}
	\setcounter{footnote}{0} 

    \section{Introduction}
    
    A perspective on constrained optimization problems that has gained substantial attention is the use of intrinsic geometry of the underlying space on which the optimization problem is posed. A typical problem is of the form 
    \begin{equation}\label{Problem1} \min_{x\in \mathcal{M}} f(x)\,, \end{equation}  
    
    where \(\mathcal{M}\) is a Riemannian manifold and \(\map{f}{\mathcal{M}}{\mathbb{R}}\) is geodesically convex. For the special case when \(\mathcal{M}=\mathbb{R}^{n}\), \eqref{Problem1} becomes a smooth convex unconstrained optimization problem. Several constrained optimization problems in different fields of science and engineering can be posed as optimization problems on Riemannian manifolds. This includes the eigenvalue problem \citep{golub2013matrix}, the Karcher-mean problem \citep{bini2013computing}, semidefinite programming \citep{burer2005local}, Gaussian Mixture Models \citep{hosseini2015matrix}, dictionary learning \citep{sun2016complete}, matrix completion \citep{vandereycken2013low} and statistical shape analysis \citep{ring2012optimization} among others. For a more comprehensive review, see \cite{boumal_2023}. Solving \eqref{Problem1} is challenging in general. Our interest in this paper is in the first-order methods (those using the Riemannian gradient of $f$) to solve \eqref{Problem1}. For instance, the Riemmanian gradient descent has been proposed and studied in detail, see e.g., \cite{udriste1994convex} and \cite{zhang2016first}. As a natural progression, more recently, first-order accelerated versions have also been studied on Riemannian manifolds, see e.g., \cite{ahn2020nesterov, alimisis2020continuous, alimisis2021momentum}. Motivated from the Euclidean setting, we aim to understand the (fast) convergence of first-order algorithms on problems posed over a manifold, such as \eqref{Problem1}. To this end, we take inspiration from Nesterov's accelerated gradient algorithm \citep{nesterov1983method}.
    
    For convex optimization problems in the Euclidean setting, that is, when the objective function \(f\) is a convex function on \( \mathbb{R}^{n}\), \citet{nesterov1983method} improved the rate of convergence of vanilla gradient descent from \(O\left(\frac{1}{k}\right)\) to \(O(\frac{1}{k^{2}})\), where \(k\) denotes the iteration number. In fact, this rate was proved to be optimal among all first-order methods for convex functions with Lipschitz conitnuous gradient \citep{Nesterov1}. This result is a milestone in the history of convex optimization and continues to be significant with the continually growing size and scale of practical problems. However, the analysis of Nesterov's method is non-trivial. As a result, several attempts have been made to understand the underlying mathematical structure of accelerated methods and come up with different perspectives to obtain new insights. One way of understanding acceleration is to look at the continuous-time dynamics of optimization algorithms, which provides powerful analytic tools. 
    
    \citet{su2014differential} proposed the following second-order in time ordinary differential equation (dynamical system) towards understanding Nesterov's accelerated gradient algorithm in the Euclidean case
    \begin{equation}\label{ODE1}  \ddot{X}(t) + \frac{\alpha}{t} \dot{X}(t) + \nabla f(X(t)) = 0\,,\end{equation} 
    
    for \(t> 0\) and \(\alpha>0\), with initial conditions \(X(0)=x_{0}\) and \(\dot{X}({0})= 0\).
    
    In this dynamical system, \(\dot{X}\) and \(\ddot{X}\) denote the velocity and acceleration of the trajectory \(X\) respectively. A curve \(\map{X}{[0,\infty)}{\mathbb{R}^{n}}\) such that \(X\in C^{2}(0,\infty)\cap C^{1}[0,\infty)\) is called a solution if it satisfies \eqref{ODE1} for \(t>0\) and the stated initial conditions. \cite{su2014differential} prove the existence and uniqueness of such a solution for \eqref{ODE1}.  
    
    A proper discretization of \eqref{ODE1} gives the Nesterov's accelerated gradient method. This system is similar to the classical spring--mass--damper system and equates force given as the product of unit mass and acceleration with the gradient of a convex potential function and the damping force proportional to the velocity and an asymptotically vanishing viscous damping coefficient \(\alpha/t\). In the absence of the damping term, \eqref{ODE1} is a conservative system and therefore the presence of damping is crucial for the system to be useful to solve an optimization problem. For \eqref{ODE1}, \citet{su2014differential} prove an accelerated convergence rate of \(f(X(t))-\mathrm{min}f = O\left(\frac{1}{t^{2}}\right)\) for \(\alpha\geq 3\) and this has triggered an immense follow up work in the Euclidean and Hilbertian setting. 
    
    In this vast array of works that follow \cite{su2014differential}, we focus on the contributions that form the basis of this work. In particular, \cite{may2017asymptotic} proves the convergence rate of objective values for \eqref{ODE1} with \(\alpha>3\) is strictly faster than \(O\big(\frac{1}{t^{2}}\big)\) and that \(f(X(t))-\mathrm{min}f = o(\frac{1}{t^{2}})\). In infinite dimensional real Hilbert spaces, \cite{attouch2018fast} show weak convergence of the trajectory to a point in the set of minimizers of \(f\) (\(\mathrm{argmin}f\)) provided the latter is non-empty. \cite{attouch2016rate} prove little-o convergence rate for objective values and the convergence of iterates in the discrete setting. \cite{attouch2019rate} analyze \eqref{ODE1} for the case \(0<\alpha \leq 3\). They show that the rate of convergence of objective values undergoes a phase transition and that \(\alpha=3\) is the smallest constant for which the rate of convergence of objective values is \(O\left(\frac{1}{t^{2}}\right)\). A similar analysis was carried out by \cite{vassilis2018differential} and \cite{apidopoulos2020convergence} in the case of differential inclusion problem modeling the FISTA algorithm and the Forward--Backward algorithm.
    
    It is natural to ask whether the above results can be extended to \eqref{Problem1} when \(\mathcal{M}\) is a Riemannian manifold. The continuous-time dynamical systems perspective for optimization on Riemannian manifolds has been studied by \cite{munier2007steepest} and \cite{alimisis2020continuous}. \cite{munier2007steepest} analyzed continuous-time dynamics of steepest descent method on Riemannian manifolds and proved convergence of the trajectory to a point in the set of minimizers for geodesically convex functions. \cite{alimisis2020continuous} generalized \eqref{ODE1} to Riemannian manifolds and proved \(f(X(t))-\mathrm{min}_{\mathcal{M}}f=O\left(\frac{1}{t^{2}}\right)\) when \(\alpha\) is chosen appropriately in a way that takes into account the curvature of the manifold (see Section \ref{PST} for a precise meaning). 
    
    Therefore, in line with a few related approaches we study a generalization of \eqref{ODE1} to Riemannian manifolds proposed by \cite{alimisis2020continuous} which we describe in detail in Section \ref{PST}. Our contributions are summarized as follows;
    
    \begin{enumerate}[label=(\roman*)]
        \vspace{-.20cm}
        \item When \(\alpha\) is larger than a threshold value, we prove the rate of convergence of objective values is actually \(o\left(\frac{1}{t^{2}}\right)\), which is strictly faster than the previously known rate \(O(\frac{1}{t^{2}})\). In addition, we prove the convergence of the trajectory to an element in the set of minimizers \(\mathrm{argmin}_{\mathcal{M}}f\). 
        
        \item  For \(\alpha\) below the threshold value, we provide convergence rates for objective values. In the same setting, we show convergence of trajectory to an element in \(\mathrm{argmin}_{\mathcal{M}}f\) under the condition that it satisfies the strong minimization property. 
        
        \item  We perform computational experiments that confirm our theoretical guarantees.  

    \end{enumerate}  
    
    \section{Related Work} \label{RelWo}

    In the Euclidean setting, some of the earlier works studying continuous-time dynamics for first-order accelerated methods include the works of \cite{alvarez2000minimizing} and \cite{attouch2000heavy} who study \eqref{ODE1} with a constant damping term instead of an asymptotically vanishing damping term. \cite{cabot2009long} study \eqref{ODE1} with a general asymptotically vanishing damping term \(\mathrm{a}(t)\) and showed that when \(\mathrm{a}(t)\) is non-integrable, the solution to the dynamical system possesses optimization properties i.e., \(f(X(t))\rightarrow \mathrm{min}f\). \cite{su2014differential} consider \(a(t)=\alpha/t\) and prove accelerated convergence rate of \(\mathcal{O}\left(\frac{1}{t^{2}}\right)\) when \(\alpha\geq 3\). For \(\alpha>3\), \cite{attouch2018fast} show convergence of the trajectory to an optimal solution while \cite{may2017asymptotic} shows little-o convergence rate for objective values. \cite{attouch2019rate} provide convergence results for the case \(0<\alpha\leq 3\) and prove convergence of trajectory in the case where the minimizer possesses strong minimization property. Further, in the case \(\alpha=3\), for a convex objective function, they show convergence of trajectory to the optimal solution in one dimensional problems. More recently, \cite{attouch2022ravine} have studied the Ravine method from a dynamical systems perspective and drawn similarities with the Nesterov's accelerated gradient method. A more comprehensive survey of historical aspects and research trends related to continuous-time dynamics for achieving acceleration is provided by \cite{attouch2022ravine} and \cite{attouch2017asymptotic}. 
    
     In the Riemannian setting, standard references on optimization algorithms on Riemannian manifolds include \cite{absil2008optimization} and \cite{boumal_2023}. \cite{udriste1994convex} is a standard reference for convex analysis on Riemannian manifolds while \cite{vishnoi2018geodesic} provides a detailed pedagogical survey of geodesically convex sets and geodesically convex functions on Riemannian manifolds. \cite{zhang2016first} develop techniques that are used to provide convergence guarantees for gradient descent method for geodesically convex functions on Riemannian manifolds. First-order accelerated algorithms for minimizing geodesically convex functions on Riemannian manifolds have been studied by \cite{zhang2018towards,ahn2020nesterov,han2023riemannian,alimisis2021momentum}. \cite{zhang2018towards} propose a computationally tractable accelerated method for geodesically strongly convex functions by proposing a new estimate sequence and show accelerated rate of convergence locally. \cite{ahn2020nesterov} propose the first global accelerated algorithm on Riemannian manifolds for geodesically strongly convex functions. In particular, \cite{zhang2018towards} and \cite{ahn2020nesterov} develop techniques to tackle metric distortion that is inherent to the analysis of algorithms on Riemannian manifolds. \cite{han2023riemannian} generalize the work of \cite{scieur2016regularized} to Riemannian manifolds and propose acceleration using extrapolation. \cite{alimisis2021momentum} employ momentum in combination with techniques developed by \cite{zhang2016first} to achieve acceleration and provide accelerated convergence guarantees for geodesically convex functions. 

     A peculiar aspect of the analysis of accelerated first-order methods on Riemannian manifolds is the set of assumptions under which the results are proved. In general, one of the standard conditions in Riemannian optimization is to assume that the exponential map is a global diffeomorphism (see Section \ref{SecPrelim} for further details). This condition ensures that the exponential map is invertible and smooth. Additionally, in order to derive results about first-order accelerated methods in both the discrete and continuous setting, we work in a bounded subset of the manifold and in particular, we must make the assumption that the trajectories in the continuous setting or iterates in the discrete setting lie in that bounded domain, see e.g., \cite{zhang2018towards}, \cite{alimisis2020continuous} and \cite{alimisis2021momentum}. In other words, this means that the results are valid only for trajectories or iterates that lie within that bounded subset. This is because in curved spaces, the analysis makes use of certain comparison theorems like the Rauch comparison theorem \citep{petersen2006riemannian} that dictate the size of the domain in which we must confine our analysis. 
     
     In this work, we consider the continuous-time dynamical system approach towards understanding acceleration of first-order optimization methods on Riemannian manifolds. In particular, we close the gaps in convergence guarantees between the Euclidean and Riemannian settings for the continuous-time dynamics modelling Nesterov’s acceleration. We shall work with similar assumptions and for reasons discussed above. A more detailed description follows in later sections.    
    
    \section{Preliminaries from Riemannian Geometry} \label{SecPrelim}
    
    We recall some basic concepts from Riemannian geometry that we shall make references to during the course of this work. This material can be found in standard references on the subject like \cite{Tu1}, \cite{Godinho1} and \cite{boumal_2023}.
    
    \paragraph{Riemannian manifolds.}A smooth manifold is a Hausdorff, second-countable topological manifold such that the chart transition maps are of class \(C^{\infty}\). To a smooth manifold, it is possible to attach at every point \(x\), a real vector space called the tangent space \(T_{x}\mathcal{M}\). The union of all tangent spaces over the manifold \(\mathcal{M}\) can be imparted a smooth manifold structure and is called the tangent bundle \(T\mathcal{M}=\cup_{x\in\mathcal{M}}T_{x}\mathcal{M}\). A smooth vector field on the manifold is a smooth map \(\map{Z}{\mathcal{M}}{T\mathcal{M}}\). A Riemannian manifold is an ordered pair \(\left(\mathcal{M},\langle \cdot,\cdot \rangle\right)\) where \(\langle\cdot,\cdot\rangle\) defines an inner product \(\langle\cdot,\cdot\rangle_{x}\) on the tangent space \(T_{x}\mathcal{M}\) for every \(x\in \mathcal{M}\). This assignment is smooth in the sense that the map \(x\mapsto \langle Y_{x},Z_{x}\rangle_{x}\) is a \(C^{\infty}\) function on \(\mathcal{M}\), where \(Y_{x}\) and \(Z_{x}\) are tangent vectors at \(x\) corresponding to \(C^{\infty}\) vector fields \(Y\) and \(Z\) on the manifold. The inner product on the tangent space gives the norm of a tangent vector \(Y_{x}\) as \(\|Y_{x}\|_{x}\coloneqq\sqrt{\langle Y_{x},Y_{x}\rangle}_{x}\). From now on, the subscript on the inner product highlighting the point on which the inner product is evaluated will be dropped when it is clear from the context. 
    
    The Riemannian inner product allows for measurement of length of a piece-wise smooth curve \(\map{\gamma}{[a,b]}{\mathcal{M}}\) using the formula \(\ell(\gamma)\coloneqq\int_{a}^{b}\|\gamma^{\prime}(t)\|_{\gamma(t)}\), where \(\gamma^{\prime}(t)\) is the velocity vector field of the curve \(\gamma\). This gives rise to the notion of distance between two points \(x\) and \(y\) on the manifold \(\mathcal{M}\) given as \(d(x,y)\coloneqq\inf_{\gamma}\ell(\gamma)\), where infimum is taken over all piecewise smooth curves from \(a\) to \(b\) on \(\mathcal{M}\) such that \(\gamma(a)=x\) and \(\gamma(b)=y\). The Riemannian manifold equipped with this distance becomes a metric space. We can also define the diameter of a subset \(\mathcal{C}\) of the manifold \(\mathcal{M}\) as \(\mathrm{diam}(\mathcal{C}) = \mathrm{sup}_{x,y\in\mathcal{C}}d(x,y)\). 
    
    \paragraph{Geodesics and parallel transport.}Geodesics generalize the notion of straight lines on curved spaces. The differentiation of vector fields along a curve is possible via the notion of a covariant derivative \(\frac{D}{dt}\) associated with the unique connection operator on a Riemannian manifold called the affine or Levi--Civita connection \(\nabla\). Given a curve \(\gamma\) on a manifold \(\mathcal{M}\) and the corresponding velocity vector field \(\gamma^{\prime}\). Assuming that \(\gamma\) is at least \(C^2\)-- smooth, we call \(\gamma\) a geodesic if its velocity vector is constant, i.e. \( \frac{D}{dt}\big(\frac{d\gamma}{dt}\big)=0 \). Geodesics can also be defined as the solution to the variational problem of finding the curve of shortest length between two points. It should be noted that, a curve with the least distance between two points has zero acceleration, however a curve with zero acceleration need not be curve of least distance between two points. For example, on a sphere any two points can joined by both the short and long segments of the same geodesic which is the great circle.
    
    Parallel transport refers to transporting a tangent vector along a curve such that it remains constant along the curve. On a Riemannian manifold \(\mathcal{M}\) equipped with its affine connection and the covariant derivative \(\frac{D}{dt}\), for any smooth curve \(\gamma\) and \(v\in T_{\gamma(0)}\mathcal{M}\), there exists a unique vector field \(Z\) along the curve \(\gamma\) such that \(\frac{D}{dt}Z=0\) and \(Z(0)=v\) \citep{boumal_2023}. We use the notation by \cite{zhang2016first} to denote parallel transport by \(\Gamma_{x}^{y}v\) where \(v\in T_{x}(\mathcal{M})\) is transported to \(y\), that is, \(T_{y}(\mathcal{M})\) via the geodesic \(\gamma\). Parallel transport preserves inner products, i.e. \(\langle u,\,v\rangle_{x} = \langle \Gamma_{x}^{y}u,\,\Gamma_{x}^{y}v\rangle_{y}\).

    The notion of parallel transport allows us to define \(L\)- smoothness of a function \(f\) defined on a Riemannian manifold \(\mathcal{M}\). A function \(f\) is geodesically \(L\)- smooth if \(\left\|\mathrm{grad}f(x)-\Gamma_{y}^{x}\mathrm{grad}f(y)\right\|_{x}\leq L\ell(\gamma)\) for some \(L>0\) and for all \(x,y\in \mathcal{M}\) and a geodesic \(\gamma\).
    
     \paragraph{Exponential and logarithmic maps.}The exponential map denoted as \(\map{\mathrm{Exp}_{x}}{T_{x}\mathcal{M}}{\mathcal{M}}\), operates on a tangent vector \(v\in T_{x}\mathcal{M}\) and gives a point on the manifold that lies on the unique geodesic through \(x\) with initial velocity \(v\). The point \(\mathrm{Exp}_{x}(v)\) lies at a distance of \(\|v\|_{x}\) from \(x\) on the geodesic. The exponential map is not injective, however on a Riemannian manifold, we can define the radius of injectivity where the exponential map is a diffeomorphism. If the radius of injectitivity is non-zero, then within this neighborhood, it is possible to define the inverse of the exponential map called as logarithmic map denoted as \(\map{\mathrm{Log}_{x}}{\mathcal{M}}{T_{x}\mathcal{M}}\). \(\mathrm{Log}_{x}(y)\) gives the tangent vector in \(T_{x}(\mathcal{M})\) whose exponential map would give \(y\) and whose length equals the distance \(d(x,y)\), i.e. \(d(x,y)=\left\|\mathrm{Log}_{x}y\right\|_{x}\).
    
    \paragraph{Riemannian gradient and Riemannian Hessian.}For a smooth function \(\map{f}{\mathcal{M}}{\mathbb{R}}\), the differential \(\map{Df(x)}{T_{x}\mathcal{M}}{\mathbb{R}}\) is defined as \(Df(x)[v]\coloneqq(f\circ \gamma)^{\prime}(0)\), where \(v\in T_{x}\mathcal{M}\) and \(\gamma\) is a curve on the manifold such that \(\gamma(0)=x\) and \(\gamma^{\prime}(0)=v\). The Riemannian gradient of \(f\) is the unique vector field denoted by \(\mathrm{grad}f\) on \(\mathcal{M}\) such that for all \((x,v)\in T\mathcal{M}\) we have \(Df(x)[v]= \langle v,\,\mathrm{grad}f(x)\rangle_{x}\). 
    The Riemannian Hessian of \(f\) at \(x\in\mathcal{M}\) is a linear operator \(\map{\mathrm{Hess}f}{T_{x}\mathcal{M}}{T_{x}\mathcal{M}}\) defined as \(\mathrm{Hess}f(x)[v] = \nabla_{v}\mathrm{grad}f\).
    
    \paragraph{Sectional curvature.}Sectional curvature generalizes the notion of Gaussian curvature of two-dimensional surfaces to higher dimensions. Starting with any two-dimensional subspace \(\Pi_{p}\) of the tangent space at a point \(x\in\mathcal{M}\), the image of \(\Pi_{x}\) under the exponential map locally spans a two-dimensional surface \(S_{\Pi_{x}}\) such that \(T_{x}S_{\Pi_{x}}=\Pi_{x}\). Then the sectional curvature denoted as \(K(\Pi_{x})\) associated with \(\Pi_{x}\) is the Gaussian curvature of \(S_{\Pi_{x}}\). Since the sectional curvature is dependent of the choice of the subspace \(\Pi_{p}\), we work with a tight global lower bound on the sectional curvature of the manifold denoted by \(K_{\mathrm{min}}\). A similar tight global upper bound on the sectional curvature is denoted by \(K_{\mathrm{max}}\).
    
    \paragraph{Geodesic convexity.}The notion of convex sets and convex functions can be generalized to Riemannian manifolds by replacing straight lines with geodesics. A subset \(\mathcal{C}\subset \mathcal{M}\) is called a geodesically convex set if for every \(x,y\in \mathcal{C}\), there exists a geodesic \(\map{\gamma}{[0,1]}{\mathcal{M}}\) such that \(\gamma(0)=x\) and \(\gamma(1)=y\) and \(\gamma(t)\in \mathcal{C}\) for \(t\in[0,1]\). Since it is not necessary to have a unique geodesic between two points on a manifold (for example, two points on a sphere are joined by two segments of the same geodesic great circle of different lengths), we can define geodesically unique convex sets. A geodesically unique convex set has one geodesic segment joining any two points in the set. A function \(f:\mathcal{C}\rightarrow \mathbb{R}\) is called geodesically convex function if for any geodesic \(\gamma\) with \(\gamma(0)=x\) and \(\gamma(1)=y\) we have \(f(\gamma(t))\leq (1-t)f(x)+tf(y)\,\text{for all}\,t\in[0,1]\). Further, a differentiable geodesically convex function satisfies 
    \begin{equation}\label{SJ2} f(y)\geq f(x)+\left\langle\mathrm{grad}f(x),\,\mathrm{Log}_{x}y\right\rangle_{x}\,,\end{equation} 

    for every \(x\) and \(y\) in the geodesically uniquely convex set \(\mathcal{C}\). We shall refer to \eqref{SJ2} through out this work. 
    
    \section{Problem Setting}\label{PST}
     
    We consider the problem in \eqref{Problem1} and study the following dynamical system proposed by \cite{alimisis2020continuous} as a generalization to \eqref{ODE1} 
    \begin{equation}\label{RODE1} \nabla \dot{X}(t) + \frac{\alpha}{t}\dot{X}(t) + \mathrm{grad} f(X(t)) = 0\,,\,\, t>0\,,\quad X(0)=x_{0}\,\, \text{and} \,\,\dot{X}(0)=0\,\,,\end{equation} 
    for \(\alpha>0\) and \(x_{0}\in \mathcal{M}\). 
    
    In this dynamical system, \(\mathrm{grad}f\) denotes the Riemannian gradient of the objective function \(f\) in \eqref{Problem1}, \(\dot{X}\) denotes the velocity vector field of the trajectory \(X\) and \(\nabla\dot{X}\) denotes the covariant derivative of the velocity vector field that generalizes the acceleration term in \eqref{ODE1}.
    
   \begin{definition}A curve \(X\in C^{2}(0,\infty)\cap C^{1}[0,\infty)\) on the manifold \(\mathcal{M}\) that satisfies \eqref{RODE1} is called a solution to \eqref{RODE1}.
       
   \end{definition} \cite{alimisis2020continuous} prove existence of solution for \eqref{RODE1} with \(\alpha>0\) under conditions stated in Assumption \ref{Assumption1} below. Their existence result is stated in Proposition \ref{Propo1}. However, uniqueness of solution to \eqref{RODE1} is not guaranteed.
   
    \begin{assumption}\text{}
    \label{Assumption1}
    \vspace{-0.25cm}
        \begin{enumerate}
            \item[i)] The objective function \(f\) is geodesically convex and geodesically \(L\)-smooth.
            \item[ii)] The manifold \(\mathcal{M}\) is geodesically complete.
            \item[iii)] The exponential map is a global diffeomorphism on the manifold \(\mathcal{M}\). 

        \end{enumerate}
    \end{assumption}
    
    \begin{remark}
    For a geodesically complete Riemannian manifold \(\mathcal{M}\), any two points on \(\mathcal{M}\) can be joined by a geodesic. The assumption of a global diffeomorphism of the exponential map ensures that the logarithmic map is well defined. Further, it is worth mentioning that an important class of manifolds called the Hadamard manifolds satisfy these conditions \citep{petersen2006riemannian}.
    \end{remark} 
    \begin{proposition}\label{Propo1}\citep{alimisis2020continuous} Under Assumption \ref{Assumption1},  for \(\alpha>0\), System \ref{RODE1} has a solution \(\map{X}{[0,\infty)}{\mathcal{M}}\).
    \end{proposition}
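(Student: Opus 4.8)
The plan is to build the solution in three stages: a local existence result on a short interval $[0,\delta]$ that copes with the singularity of the damping term at $t=0$; a continuation on $(0,\infty)$ by the Picard--Lindel\"of theorem on the tangent bundle; and an a priori bound which, together with geodesic completeness, promotes the maximal solution to a global one on $[0,\infty)$. Throughout, Assumption \ref{Assumption1} is used exactly where one expects: (iii) to work in coordinates and keep $\mathrm{Log}$ globally available, (i) for Lipschitz control of $\mathrm{grad}f$, and (ii) together with Hopf--Rinow to preclude escape to infinity in finite time.

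\textbf{Local solution near $t=0$.} Work in a normal coordinate chart centred at $x_{0}$, write $x(t)$ for the coordinates of $X(t)$ and $v=\dot x$. Then \eqref{RODE1} becomes $\dot x=v$, $\dot v=-\Gamma_{x}(v,v)-\tfrac{\alpha}{t}v-\mathrm{grad}f(x)$ with $x(0)=0$, $v(0)=0$, where $\Gamma_{x}(v,v)$ has components $\Gamma^{k}_{ij}(x)v^{i}v^{j}$ built from the smooth Christoffel symbols and, by Assumption \ref{Assumption1}(i), $\mathrm{grad}f$ is bounded and Lipschitz on the coordinate ball. Multiplying the second equation by $t^{\alpha}$ and integrating from $0$ — the boundary term vanishing because $v(0)=0$ and $\alpha>0$ — yields the equivalent fixed-point system
\begin{align*}
v(t) &= -\,t^{-\alpha}\!\int_{0}^{t} s^{\alpha}\bigl(\Gamma_{x(s)}(v(s),v(s))+\mathrm{grad}f(x(s))\bigr)\,ds,\\
x(t) &= \int_{0}^{t} v(r)\,dr.
\end{align*}
Since $t^{-\alpha}\!\int_{0}^{t}s^{\alpha}\,ds=t/(\alpha+1)$, the singular weight is integrable and small near $0$, so the right-hand side is a self-map of a small closed ball of $C([0,\delta];\mathbb{R}^{n}\times\mathbb{R}^{n})$ and a contraction once $\delta$ is small enough. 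Its fixed point is a curve that is $C^{1}$ up to $t=0$, satisfies the prescribed initial data, and — the integrands being continuous — is $C^{2}$ on $(0,\delta]$ and solves \eqref{RODE1} there.

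\textbf{Continuation and global existence.} For $t\ge\delta$ the first-order system governing $(X,\dot X)$ on $T\mathcal{M}$ has right-hand side continuous in $t$ and locally Lipschitz in the state (the factor $\alpha/t$ being harmless away from $0$), so the Picard--Lindel\"of theorem on manifolds gives a unique maximal solution on some $[\delta,T_{\max})$ that glues with the piece from the previous step. To see $T_{\max}=\infty$, estimate $\tfrac{d}{dt}\|\dot X\|^{2}=-\tfrac{2\alpha}{t}\|\dot X\|^{2}-2\langle\mathrm{grad}f(X),\dot X\rangle\le 2\|\mathrm{grad}f(X)\|\,\|\dot X\|$, invoke geodesic $L$-smoothness in the form $\|\mathrm{grad}f(X(t))\|\le\|\mathrm{grad}f(x_{0})\|+L\,d(X(t),x_{0})$, and note $\tfrac{d}{dt}d(X(t),x_{0})\le\|\dot X(t)\|$; Gr\"onwall's inequality applied to $\|\dot X\|+d(X(\cdot),x_{0})$ then bounds both quantities on every finite interval. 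Hence $\dot X$ stays bounded and $X$ remains in a bounded, and by Hopf--Rinow relatively compact, subset of $\mathcal{M}$ as $t\uparrow T_{\max}$, so a finite $T_{\max}$ would contradict maximality of the solution. Alternatively one may replace $\alpha/t$ by $\alpha/(t+\varepsilon)$, solve the resulting non-singular ODE globally for each $\varepsilon>0$, and pass to the limit $\varepsilon\to0$ using the same a priori bounds together with Arzel\`a--Ascoli.

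\textbf{Main obstacle.} The crux is the local construction at $t=0$: Picard--Lindel\"of cannot be invoked directly because of the $\alpha/t$ singularity, and, unlike the Euclidean equation \eqref{ODE1}, the coordinate form carries the quadratic-in-velocity Christoffel term $\Gamma_{x}(v,v)$. What rescues the argument is that $\dot X(0)=0$ makes that term negligible on a short interval, so the weighted integral reformulation is still a contraction; the remaining care is to check that the resulting $C^{1}$ fixed point is genuinely $C^{2}$ on $(0,\delta]$ and solves the covariant equation \eqref{RODE1}, not merely its chart expression.
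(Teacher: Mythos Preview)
The paper does not give its own proof of this proposition: it is quoted verbatim from \cite{alimisis2020continuous}, with the sentence ``\cite{alimisis2020continuous} prove existence of solution for \eqref{RODE1} with \(\alpha>0\) under conditions stated in Assumption \ref{Assumption1} below'' immediately preceding it, and the paper moreover remarks that uniqueness is not claimed. So there is nothing in the present paper to compare your argument against.

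That said, your sketch is a sound, self-contained existence proof and follows the natural adaptation of the Su--Boyd--Cand\`es argument to manifolds. The three stages are the right ones: the weighted integral reformulation \(v(t)=-t^{-\alpha}\int_{0}^{t}s^{\alpha}(\cdots)\,ds\) absorbs the \(\alpha/t\) singularity and makes the fixed-point map a contraction for small \(\delta\) (the key observation being that the Christoffel term \(\Gamma_{x}(v,v)\) is quadratic in \(v\) and hence harmless near \(v=0\)); Picard--Lindel\"of on \(T\mathcal{M}\) away from \(t=0\) is routine; and the Gr\"onwall bound on \(\|\dot X\|+d(X,x_{0})\), combined with Hopf--Rinow from Assumption~\ref{Assumption1}(ii), rules out finite-time blow-up. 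One minor point worth tightening if you write this out in full: geodesic \(L\)-smoothness is stated via parallel transport, not coordinate differences, so the passage to a Euclidean Lipschitz bound for \(\mathrm{grad}f\) in normal coordinates requires a word about the boundedness of parallel transport (equivalently, of the Christoffel symbols) on the fixed coordinate ball. Your alternative \(\alpha/(t+\varepsilon)\) regularisation plus Arzel\`a--Ascoli is in fact the route taken in \cite{su2014differential} and, to the best of my knowledge, in \cite{alimisis2020continuous} as well.
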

    
    In this work, we perform a thorough study of the asymptotic behavior of solutions to \eqref{RODE1}. For this, the curvature of the manifold plays a key role. A major difference between the Euclidean and the Riemannian setting is that the curvature of the manifold determines the choice of \(\alpha\) and hence the convergence rates. To explore this, we first discuss a crucial geometric result provided by \cite{alimisis2020continuous}. 

    Let \(K_{\mathrm{max}}\) and \(K_{\mathrm{min}}\) be the upper and lower bounds on the sectional curvature of \(\mathcal{M}\) as discussed in Section \ref{SecPrelim} and fix a diameter \(D\) that satisfies the following, 
    \begin{equation}\label{SJ1} D<\frac{\pi}{\sqrt{K_{\mathrm{max}}}}\,,\,\text{if}\,K_{\mathrm{max}}>0\,, \quad\text{and}\quad D < \infty\,,\,\text{if} \,K_{\mathrm{max}}\leq 0\,. \end{equation} 
    
    Consider a subset \(\mathcal{C}\subset\mathcal{M}\), such that \(\mathrm{diam}(C)\leq D\) where \(\mathrm{diam}(C)\) denotes the diameter of \(C\) (as discussed in Section \ref{SecPrelim}), a curve \(\map{X}{I}{\mathcal{C}}\), where \(I\subset \mathbb{R}\) and a point \(z\in \mathcal{C}\). Then \(d\left(X(t),z\right)\) quantifies the distance between a point \(X(t)\) on the curve \(X\) and the point \(z\). A key step in the analysis requires a bound on the eigenvalues of \(-\mathrm{Hess}\left(-\frac{1}{2}d\left(X(t),z\right)^{2}\right)\), where \(\mathrm{Hess}\) denotes the Riemannian Hessian. This is equivalent to an expression of the form \(\left\langle-\nabla_{\dot{X}}\mathrm{grad}\left(-\frac{1}{2}d\left(X(t),z\right)^{2}\right), \dot{X}\right\rangle\) where \(\nabla_{\dot{X}}\mathrm{grad}\left(-\frac{1}{2}d\left(X(t),z\right)^{2}\right)\) denotes the covariant derivative of the Riemannian gradient vector field of \(-\frac{1}{2}d\left(X(t),z\right)^{2}\). The Riemannian gradient vector field is given as \begin{equation}\label{last1}\mathrm{grad}\left(-\frac{1}{2}d\left(X(t),z\right)^{2}\right)=\mathrm{Log}_{X(t)}z\,,   \end{equation}
    
    and a proof for \eqref{last1} can be found in \citep{pennec2018barycentric,alimisis2020continuous}.
    
    Then \cite{alimisis2020continuous} provide the following bounds, 
    \begin{equation}\label{OMG1} \sigma\left(d\left(X(t),z\right)\right)\left\| \dot{X}(t)\right\|^{2} \leq \left\langle \nabla_{\dot{X}(t)}\mathrm{Log}_{X(t)}z, -\dot{X}(t) \right\rangle \leq \xi\left(d\left(X(t),z\right)\right)\left\| \dot{X}(t) \right\|^{2}\,, \end{equation}
    
    where 
    \begin{equation}\label{OMG2}\sigma\left(p\right) \coloneqq \begin{cases}1\,\,\,\,\,\,\,\,\,\,\,\,\,\,\,\,\,\,\,\,\,\,\qquad\qquad\qquad,\,\text{if}\,K_{\mathrm{max}}\leq 0\,; \\ \sqrt{K_{\mathrm{max}}}\,p\,\mathrm{cot}\left(\sqrt{K_{\mathrm{max}}}\,p\right),\,\text{if}\,K_{\mathrm{max}}>0\,,\end{cases}\end{equation}
    
    and 
    \begin{equation}\label{OMG3}\xi\left(p\right) \coloneqq \begin{cases}\sqrt{-K_{\mathrm{min}}}\,p\,\mathrm{coth}\left(\sqrt{-K_{\mathrm{min}}}\,p\right)\,,\,\text{if}\,K_{\mathrm{min}}<0\,; \\ 1\,\,\,\,\,\,\,\,\,\,\,\,\,\,\,\,\,\,\,\,\,\qquad\qquad\qquad\qquad,\,\text{if}\,K_{\mathrm{min}}\geq 0\,.\end{cases}\end{equation}

   \begin{figure}[h!t]
    \centering
    \includegraphics[width=5.25in]{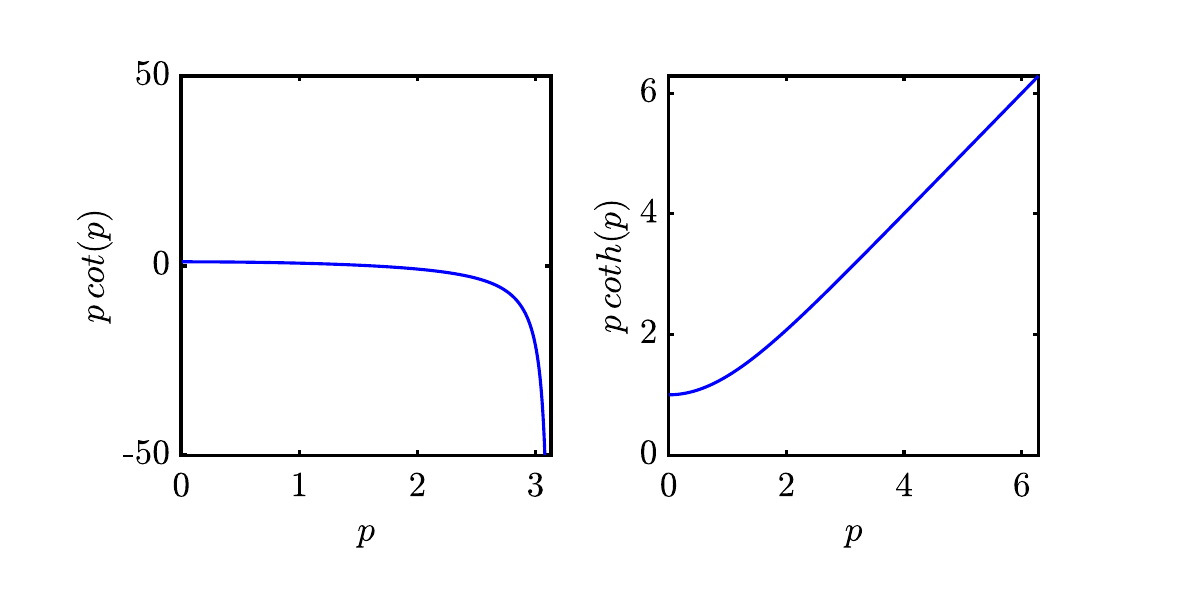}
    \caption{Functions used in the bounds given in \eqref{OMG1} with \(K_{\mathrm{max}}=1\) and \(K_{\mathrm{min}}=-1\).}
    \label{figg1}
    \end{figure}

    The functions \(p\,\mathrm{cot}(p)\) and \(p\,\mathrm{coth}(p)\) are visualized in Figure \ref{figg1}. The terms \(\sigma\left(d\left(X(t),z\right)\right)\) and \(\xi\left(d\left(X(t),z\right)\right)\) are bounds on the eigenvalues of the operator \(-\mathrm{Hess}\left(-\frac{1}{2}d\left(X(t),z\right)^{2}\right)\). From \eqref{OMG2} and \eqref{OMG3} we observe that the curvature of the manifold impacts these bounds. For the analysis of \eqref{RODE1}, the choice of \(\alpha\) depends on the upper bound on the eigenvalues and this aspect becomes clear from the proof of Theorem \ref{pythagoras} hereafter. Now, since \(\xi\left(d\left(X(t),z\right)\right)\) is dependent on the parameter \(t\), we instead consider an upper bound on \(\xi\) by evaluating it at \(D\) and define:
    \begin{equation}\label{Noidea1} \zeta \coloneqq \xi\left(D\right) \quad \text{and} \quad \delta \coloneqq 2\zeta+1\,. \end{equation}
    
    We now make some important observations about the terms \(\zeta\) and \(\delta\).  

    \begin{enumerate}[label=(\roman*)]
    
        \item Since the function \(p\coth(p)\) is strictly increasing for \(p\in (0,\infty)\), on the set \(\mathcal{C}\), we have \(\xi\left(d\left(X(t),z\right)\right)\leq \zeta\). Thus the upper bound in \eqref{OMG1} can be bounded as 
        \begin{equation}\label{Noidea3}\left\langle \nabla_{\dot{X}(t)}\mathrm{Log}_{X(t)}z,-\dot{X}(t)\right\rangle \leq \xi\left(d\left(X(t),z\right)\right)\left\| \dot{X}(t) \right\|^{2} \leq \zeta\left\|\dot{X}(t)\right\|^{2}\,.
        \end{equation} 
        \item When \(K_{\mathrm{min}}\geq 0\), from \eqref{OMG3} we have \(\zeta=1\) and thus \(\delta=3\). 
        \item When \(K_{\mathrm{min}} < 0\), from \eqref{OMG3}, since \(p\,\mathrm{coth(p)>1}\) for \(p\in (0,\infty)\), we have \(\zeta>1\) and thus \(\delta >3\).

    \end{enumerate}  
    
     At this stage, we summarize the chain of events. We begin with a second-order system as defined in \eqref{RODE1} and under Assumption \ref{Assumption1}, we have the existence of solutions to the system for \(\alpha>0\). We calculate \(\delta\) as in \eqref{Noidea1} and this is independent of any conditions required for the existence of a solution. 
     This is important to note because later we will analyze \eqref{RODE1} for \(0<\alpha\leq \delta\) and \(\alpha>\delta\) which is the Riemannian analog of \(0<\alpha\leq 3\) and \(\alpha>3\) in the Euclidean setting. So in the Riemannian setting, \(\delta\) corresponds to the constant \(3\) in the Euclidean case. Finally, to prove our main results we will make use of the bound in \eqref{Noidea3} for the case where \(X\) is any solution of \eqref{RODE1} and \(z\in \mathrm{argmin}_{\mathcal{M}}f\). 
     
    Based on this discussion, we complement Assumption \ref{Assumption1} with the following standing assumptions. 
    
    \begin{assumption} Let \(D\) satisfy \eqref{SJ1}.
    \label{Assumption2}
    \vspace{-0.25cm}
        \begin{enumerate} 
            \item[i)] The sectional curvature of \(\mathcal{M}\) is lower bounded by \(K_{\min}>-\infty\). 
            \item[ii)] \(\mathcal{C}\) is a geodesically convex subset of \(\mathcal{M}\) with \(\mathrm{diam}(C)\leq D\). 
            \item[iii)] The set of minimizers \(\mathrm{argmin}_{\mathcal{M}}f\neq \emptyset\) and \(\mathrm{argmin}_{\mathcal{M}}f\subset \mathcal{C}\).
            \item[iv)] The initial point \(x_{0}\in\mathcal{C}\) and all the solutions to \eqref{RODE1} remain inside the set \(\mathcal{C}\).
        \end{enumerate}
    \end{assumption}

    \begin{remark}
     In order to use \eqref{SJ2}, we make the assumption that \(\mathcal{C}\) is geodesically convex. Since uniqueness of solution to \eqref{RODE1} is not guaranteed, we make the assumption that all trajectories remain inside \(\mathcal{C}\). Furthermore, we have a rather mild assumption that the set of minimizers is contained in \(\mathcal{C}\). The last condition in Assumption \ref{Assumption2} has been discussed in Section \ref{RelWo} as a standard assumption in the study of first-order accelerated dynamics and algorithms on Riemannian manifolds, see e.g., \cite{zhang2018towards} and \cite{alimisis2021momentum} (in the discrete setting) and \cite{alimisis2020continuous} (in the continuous-time setting).
    \end{remark} 
    
    For \eqref{RODE1} with \(\alpha=\delta\), under Assumptions \ref{Assumption1} and \ref{Assumption2}, \cite{alimisis2020continuous} prove that the convergence rate of objective values satisfies \(f(X(t))-\mathrm{min}_{\mathcal{M}}f=O\left(\frac{1}{t^{2}}\right)\). In this work, we extend the analysis by providing faster convergence rates and the convergence of solution to the set of minimizers for the case when \(\alpha>\delta\). We complete the analysis by providing convergence guarantees for the case when \(0<\alpha\leq\delta\).
    
    Thus, from the discussion in this section, we observe that the curvature of a Riemannian manifold impacts the choice of the damping coefficient \(\alpha\) via a curvature-dependent term given by \(\delta\). We now present the main results of this work.
    
    \section{Main Results}\label{MR}
    
    In the case \(\alpha >\delta\), which is the Riemannian analog of \(\alpha>3\) in the Euclidean setting, we improve the convergence rate for objective values in Theorem \ref{pythagoras} and prove the convergence of solution trajectories of \eqref{RODE1} to an element in the set \(\mathrm{argmin}_{\mathcal{M}}f\) in Theorem \ref{LeviCivita}. In Theorem \ref{Fermat}, we analyze the convergence rate in the sub-critical case \(0< \alpha\leq \delta\) and for the same setting, in Theorem \ref{Chandrama}, we prove convergence of trajectories under the assumption that the minimizer satisfies the strong minimization property. 
    
    \subsection{Improved convergence rate when \(\alpha>\delta\)}

    Our first result improves the rate of convergence of objective values from \(O\big(\frac{1}{t^{2}}\big)\) to \(o\big(\frac{1}{t^{2}}\big)\). We consider \eqref{RODE1} and perform a Lyapunov analysis similar to \citet{may2017asymptotic, attouch2018fast} and prove the following result.
    
    \begin{theorem}\label{pythagoras}
    
    Assume \(\alpha>\delta\) in \eqref{RODE1}. Then under Assumptions \ref{Assumption1} and \ref{Assumption2}, any solution \(X\) of \eqref{RODE1} satisfies  
    \begin{equation*}f(X(t))-\mathrm{min}_{\mathcal{M}} f = o\left(\frac{1}{t^{2}}\right)\,.\end{equation*}
    
    \end{theorem}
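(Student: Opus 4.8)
The strategy is to transpose the Lyapunov‑function analysis of \citet{su2014differential,may2017asymptotic,attouch2018fast} to the manifold, using the logarithmic map in place of the Euclidean displacement and the curvature bound \eqref{Noidea3} to control the terms that have no Euclidean counterpart. Fix $z\in\mathrm{argmin}_{\mathcal M}f$ and abbreviate $a(t):=f(X(t))-\min_{\mathcal M}f\ge 0$. Guided by the identity $\mathrm{grad}\!\left(-\tfrac12 d(\cdot,z)^2\right)=\mathrm{Log}_{(\cdot)}z$ from \eqref{last1}, I would consider an energy of the form
\[
  \mathcal E(t)\;=\;t^2 a(t)\;+\;\tfrac12\bigl\|\lambda\,\mathrm{Log}_{X(t)}z-t\dot X(t)\bigr\|_{X(t)}^2\;+\;\mu\, d(X(t),z)^2 ,
\]
where $\lambda>0$ and $\mu\ge 0$ are constants to be fixed, and differentiate it along a solution for $t>0$. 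Note that $\mathcal E(t)\ge t^2 a(t)\ge 0$ and that $d(X(t),z)\le \mathrm{diam}(\mathcal C)\le D$ by Assumption~\ref{Assumption2}, so $\mathcal E$ is bounded below and its last term is bounded; smoothness of $t\mapsto \mathrm{Log}_{X(t)}z$ and $t\mapsto d(X(t),z)^2$ is guaranteed since the exponential map is a global diffeomorphism (Assumption~\ref{Assumption1}).

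Computing $\dot{\mathcal E}$: using $\nabla_{\dot X}\dot X=-\tfrac{\alpha}{t}\dot X-\mathrm{grad}f(X)$ from \eqref{RODE1}, the identity $\langle\nabla_{\dot X}\mathrm{Log}_{X}z,\mathrm{Log}_{X}z\rangle=\tfrac12\tfrac{d}{dt}d(X,z)^2=-\langle\mathrm{Log}_X z,\dot X\rangle$ (again from \eqref{last1}), and geodesic convexity \eqref{SJ2} in the form $\langle\mathrm{grad}f(X),\mathrm{Log}_X z\rangle\le -a(t)$, the $t^2\langle\mathrm{grad}f,\dot X\rangle$ contributions cancel and one is left with a cross term in $\langle\mathrm{Log}_X z,\dot X\rangle$ of coefficient $\lambda(\alpha-1-\lambda)-2\mu$, a term $-\lambda t\,\langle\nabla_{\dot X}\mathrm{Log}_X z,\dot X\rangle$, a term $-(\alpha-1)t\|\dot X\|^2$, and $(2-\lambda)t\,a(t)$ after convexity. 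Choosing $\mu=\tfrac12\lambda(\alpha-1-\lambda)$ kills the sign‑indefinite cross term (this needs $\lambda<\alpha-1$, so that $\mu\ge0$), and bounding $-\langle\nabla_{\dot X}\mathrm{Log}_X z,\dot X\rangle\le \xi(d(X,z))\|\dot X\|^2\le\zeta\|\dot X\|^2$ via \eqref{Noidea3} yields
\[
  \dot{\mathcal E}(t)\;\le\;(2-\lambda)\,t\,a(t)\;+\;\bigl(\lambda\zeta-(\alpha-1)\bigr)\,t\,\|\dot X(t)\|^2 .
\]
Since $\alpha>\delta=2\zeta+1$ we have $2<(\alpha-1)/\zeta$, so any $\lambda\in\bigl(2,(\alpha-1)/\zeta\bigr)$ makes both coefficients strictly negative; then $\mathcal E$ is non‑increasing (which already recovers $a(t)=O(1/t^{2})$) and, integrating, both $\int^{\infty}\! s\,a(s)\,ds<\infty$ and $\int^{\infty}\! s\,\|\dot X(s)\|^{2}\,ds<\infty$. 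In the boundary case $\alpha=\delta$ the same computation with $\lambda=2$ gives only $\dot{\mathcal E}\le0$, which is precisely why it recovers the $O(1/t^{2})$ rate of \citet{alimisis2020continuous} but nothing stronger.

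For the improvement to $o(1/t^{2})$ I would follow \citet{may2017asymptotic}. The Hamiltonian $\mathcal H(t):=\tfrac12\|\dot X(t)\|^{2}+a(t)$ satisfies $\dot{\mathcal H}(t)=-\tfrac{\alpha}{t}\|\dot X(t)\|^{2}\le0$ (a direct computation from \eqref{RODE1} using metric compatibility), hence $\mathcal H\ge0$ is non‑increasing. Writing $\tfrac{d}{dt}\bigl(t^{2}\mathcal H(t)\bigr)=2t\,a(t)-(\alpha-1)\,t\|\dot X(t)\|^{2}$ and integrating from some $t_{0}>0$, the right‑hand side converges as $t\to\infty$ by the two integrability facts above, so $t^{2}\mathcal H(t)=t^{2}a(t)+\tfrac{t^{2}}{2}\|\dot X(t)\|^{2}$ has a finite limit $L\ge0$. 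If $L>0$ then $t\,\mathcal H(t)\ge \tfrac{L}{2t}$ for large $t$, contradicting $\int^{\infty} s\,\mathcal H(s)\,ds<\infty$; hence $L=0$, and since both summands are non‑negative, $t^{2}a(t)\to0$, i.e. $f(X(t))-\min_{\mathcal M}f=o(1/t^{2})$. The main obstacle is the first step: the curvature distortion produces the extra term $\bigl(\lambda\zeta-(\alpha-1)\bigr)\,t\|\dot X\|^{2}$ with no Euclidean analog, so one must tune $\lambda$ and add the distance‑squared correction precisely, and it is exactly the strict inequality $\alpha>\delta$ that simultaneously makes this term negative and delivers the square‑integrability $\int^{\infty} s\|\dot X\|^{2}\,ds<\infty$ on which the little‑o argument rests.
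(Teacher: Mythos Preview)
Your argument is correct. Both approaches reach the same endpoint—$\int^\infty s\,\mathcal H(s)\,ds<\infty$ together with the existence of $\lim_{t\to\infty}t^2\mathcal H(t)$, then Lemma~\ref{lema1}—but the routes differ. The paper works with the pair $W(t)=\mathcal H(t)$ and $h(t)=\tfrac12 d(X(t),z)^2$ separately: it derives the differential inequality $W+h''+\tfrac{\alpha}{t}h'\le\tfrac{\delta}{2}\|\dot X\|^2$, multiplies by $t$, uses the identity $\tfrac{\delta}{2}t\|\dot X\|^2=\tfrac{\delta}{2\alpha}\bigl(2tW-(t^2W)'\bigr)$, integrates, and then eliminates the $h$ terms via Cauchy--Schwarz and a quadratic bound $-ax^2+bx\le b^2/4a$ to obtain simultaneously $\sup t^2W<\infty$ and $\int^\infty sW\,ds<\infty$; the existence of $\lim t^2W$ then follows from $(t^2W)'\le 2tW$. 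You instead build a single composite Lyapunov $\mathcal E$ with the free parameter $\lambda$ (this is in fact the $p=1$ specialization of the energy used in the paper's sub-critical Theorem~\ref{Fermat}), and the strict gap $\alpha>\delta$ lets you choose $\lambda\in(2,(\alpha-1)/\zeta)$ so that $\dot{\mathcal E}$ dominates both $-t\,a(t)$ and $-t\|\dot X\|^2$; the integrability of $t\,a$ and $t\|\dot X\|^2$ then feeds directly into $(t^2\mathcal H)'=2ta-(\alpha-1)t\|\dot X\|^2$ to produce the limit. Your route is a little more modular (one energy for integrability, the Hamiltonian for the limit) and makes transparent why $\alpha>\delta$ is exactly the threshold; the paper's route avoids introducing the parameter $\lambda$ at the cost of the Cauchy--Schwarz/quadratic trick.
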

    
    \begin{proof}
    
    We fix some \(z\in\mathrm{argmin}_{\mathcal{M}}f\), define \(f^{\star}\coloneqq \mathrm{min}_{\mathcal{M}}f\) and introduce the following functions \(\map{W,\,h}{[t_{0},\infty)}{[0,\infty)}\) as 
    \begin{align*}\label{oo5}
     W(t) \coloneqq \frac{1}{2}\left\|\dot{X}(t)\right\|^{2} + f(X(t))-f^{\star}\quad\text{and}\quad
     h(t) \coloneqq \frac{1}{2}d\left(X(t),z\right)^{2}\,,
    \end{align*} 
    
    The proof strategy consists of the following steps.
    
    \begin{enumerate}[label=(\roman*)]
    
        \item We show that \(W^{\prime}(t)\leq 0\) which shows that \(W(t)\) is a non-increasing function.
        \item We show that \(\lim_{t\to\infty} t^{2}W(t)\) exists and is equal to some \(m\geq 0\). This establishes big--O convergence rate.
        \item We show that \(\int_{t_{0}}^{\infty}sW(s)ds<\infty \)\,.
        \item Based on a simple lemma described in Appendix \ref{lema1}, we must have \(m=0\).
        \item Since \(W(t)\) is a sum of positive quantities, we deduce in particular that
        \begin{equation*} \lim_{t\to\infty} t^{2}\big[f\left(X(t)\right) - f^{\star}\big]=0\,,\end{equation*}  
        
        which gives us our result. 
    \end{enumerate} 

    We now provide details of the proof of each step for which we will need the first and second derivatives of \(h\). The derivatives of \(h\) are calculated using properties of covariant derivatives of smooth vector fields on manifolds and are given as 
    \begin{align}\label{Prelim3} h^{\prime}(t) &= \left\langle \mathrm{Log}_{X(t)}z,-\dot{X}(t)\right\rangle\,, \\\label{Prelim4} h^{\prime\prime}(t) &= \left\langle \nabla_{\dot{X}(t)}\mathrm{Log}_{X(t)}z,-\dot{X}(t)\right\rangle + \left\langle \mathrm{Log}_{X(t)}z,-\nabla \dot{X}(t)\right\rangle\,. \end{align} 
    
    A proof for \eqref{Prelim3} can be found in \cite{alimisis2020continuous} whereas \eqref{Prelim4} follows from the product rule for covariant derivatives, see e.g., \cite{Tu1}[Theorem 13.2]. An explanation of these expressions for the derivatives of \(h\) including a comparison with the Euclidean case can be found in Appendix \ref{TH1}.
    
    Using the abbreviation \(\kappa(t)=\frac{\alpha}{t}\), we can write
    \begin{align} \begin{split}\label{eqn1001}
    h^{\prime\prime}(t) + \kappa(t) h^{\prime}(t) &= \left\langle \nabla_{\dot{X}(t)}\mathrm{Log}_{X(t)}z,-\dot{X}(t)\right\rangle + \left\langle \mathrm{Log}_{X(t)}z,-\nabla \dot{X}(t)\right\rangle + \kappa(t) \left\langle \mathrm{Log}_{X(t)}z,-\dot{X}(t)\right\rangle \\ & = \left\langle \nabla_{\dot{X}(t)}\mathrm{Log}_{X(t)}z,-\dot{X}(t)\right\rangle + \left\langle \mathrm{Log}_{X(t)}z,-\nabla\dot{X}(t)-\kappa(t)\dot{X}(t)\right\rangle \\&= \left\langle \nabla_{\dot{X}(t)}\mathrm{Log}_{X(t)}z,-\dot{X}(t)\right\rangle + \left\langle \mathrm{Log}_{X(t)}z,\mathrm{grad}f(X(t))\right\rangle\,, \end{split}\end{align}
    
    where the last equality follows from the definition of the ODE in \eqref{RODE1}. Next, we have 
    \begin{align}\begin{split}\label{oo2} 
    W(t) + h^{\prime\prime}(t) + \kappa(t) h^{\prime}(t) = \frac{1}{2}\left\|X(t)\right\|^{2} + f(X(t))-f^{\star} + \left\langle \nabla_{\dot{X}(t)}\mathrm{Log}_{X(t)}z,-\dot{X}(t)\right\rangle  \\ + \left\langle \mathrm{Log}_{X(t)}z,\mathrm{grad}f(X(t))\right\rangle\,. \end{split}\end{align} 
        
    From \eqref{Noidea3}, we have the following curvature-dependent bound for the first term in \eqref{Prelim4},
    \begin{equation}\label{Noidea2}\left\langle \nabla_{\dot{X}(t)}\mathrm{Log}_{X(t)}z,-\dot{X}(t)\right\rangle \leq \zeta\left\|\dot{X}(t)\right\|^{2}.
    \end{equation} 
    Using geodesic convexity of \(f\) and since \(f(z)=f^{\star}\), we can rearrange \eqref{SJ2} as
    \begin{align*} \left\langle \mathrm{Log}_{X(t)}z,\mathrm{grad}f\left(X(t)\right)\right\rangle \leq f^{\star} - f\left(X(t)\right) \,,\end{align*} 
    
    and combined with \eqref{Noidea2}, we obtain 
    \begin{align}\begin{split} \label{eqn17} W(t) + h^{\prime\prime}(t) + \kappa(t) h^{\prime}(t) &\leq  \frac{1}{2}\left\|\dot{X}(t)\right\|^{2} + \left(f(X(t))-f^{\star}\right) + \zeta\left\|\dot{X}(t)\right\|^{2}+\left(f^{\star}-f(X(t))\right)  \\ &=  \left(\frac{1 + 2\zeta}{2}\right)\left\|\dot{X}(t)\right\|^{2} =  \frac{\delta}{2}\left\|\dot{X}(t)\right\|^{2}\,, \end{split}\end{align} 

    where \(\delta\) is defined in \eqref{Noidea1}. 
    
    The following calculation 
    \begin{align}\begin{split}\label{1111} W^{\prime}(t)&=\left\langle\nabla \dot{X}(t),\dot{X}(t)\right\rangle + \left\langle \mathrm{grad}f(X(t)),\dot{X}(t),\right\rangle \\&= \left\langle-\kappa(t)\dot{X}(t)-\mathrm{grad}f(X(t)),\dot{X}(t)\right\rangle + \left\langle \mathrm{grad}f(X(t)),\dot{X}(t)\right\rangle \\ &= -\kappa(t)\left\langle\dot{X}(t),\dot{X}(t)\right\rangle\\ &= -\kappa(t)\left\|\dot{X}(t)\right\|^{2}\,, \end{split}\end{align} 
    
    shows that \({W(t)}\) is a non-increasing function.
    
    Now multiply \eqref{eqn17} by \(t\), use \(\kappa(t)=\frac{\alpha}{t}\) and rearrange to obtain 
    \begin{align}\begin{split} \label{teja1} tW(t) &\leq t\frac{\delta}{2}\left\|\dot{X}(t)\right\|^{2} -th^{\prime\prime}(t) - t\kappa(t) h^{\prime}(t) \\ &=  t\frac{\delta}{2}\left\|\dot{X}(t)\right\|^{2} -th^{\prime\prime}(t) - \alpha h^{\prime}(t)\,.\end{split}\end{align} 
    
    Now, \(t\frac{\delta}{2}\left\|\dot{X}(t)\right\|^{2}\) can be written as 
    \begin{align}\label{C10}\frac{\delta}{2} t\left\|\dot{X}(t)\right\|^{2} = \frac{\delta}{2} \frac{t^{2}}{t}\left\|\dot{X}(t)\right\|^{2} = \frac{\delta}{2\alpha}t^{2}\kappa(t)\left\|\dot{X}(t)\right\|^{2} =\frac{\delta}{2\alpha}\left(2tW(t) - (t^{2}W(t))^{\prime}\right)\,,\end{align} 
    
    where the last equality follows from \eqref{1111}. Substituting this in \eqref{teja1} and rearranging yields 
    \begin{align}\label{EQ4} \left(1-\frac{\delta}{\alpha}\right)tW(t) + \left(\frac{\delta}{2\alpha}\right)\left(t^2W(t)\right)^{\prime} \leq -th^{\prime\prime}(t) -\alpha h^{\prime}(t)\,. \end{align}  
    
    Integrating \eqref{EQ4} over \([t_{0},t]\), we obtain 
    \begin{equation}\label{Y1} \left(1-\frac{\delta}{\alpha}\right)\int_{t_{0}}^{t}sW(s)\mathrm{d}s + \left(\frac{\delta}{2\alpha}\right)(t^2W(t)) \leq C_{0} - th^{\prime}(t) + (1-\alpha) h(t)\,,\end{equation} 
    
    where \(C_{0}\coloneqq \frac{\delta}{2\alpha}\left(t_{0}^2W(t_{0})\right)+t_{0}h^{\prime}(t_{0})+(\alpha-1)h(t_{0})\)\,. 
    
    Using \eqref{Prelim3} and applying the Cauchy--Schwarz inequality on the tangent space at \(X(t)\) provides 
    \begin{equation}\label{Teja2} t\left|h^{\prime}(t)\right|\leq t\left\|\mathrm{Log}_{X(t)}z\right\|\,\,\left\|\dot{X}(t)\right\|\,.\end{equation} 
    
    From the definition of \(W(t)\), we have \(\left\|\dot{X}(t)\right\|\leq \sqrt{2W(t)}\)\,. 
    
    Combined with observations made in \eqref{Teja2} and using the fact that \(h(t) = \frac{1}{2}\left\|\mathrm{Log}_{X(t)}z\right\|^{2}\) \(\left(\text{since}\,d(x,y)=\left\|\mathrm{Log}_{x}y\right\|_{x}\right)\), we obtain \(t|h^{\prime}(t)|\leq 2\sqrt{t^{2}W(t)}\sqrt{h(t)}\) and therefore
    \begin{align*}-th^{\prime}(t) \leq  2\sqrt{t^{2}W(t)}\sqrt{h(t)}\,. \end{align*} 
    
    Use this in \eqref{Y1} to arrive at
    \begin{align*}\left(1-\frac{\delta}{\alpha}\right)\int_{t_{0}}^{t}sW(s)\mathrm{d}s + \left(\frac{\delta}{2\alpha}\right)(t^2W(t)) \leq C_{0} + 2\sqrt{t^{2}W(t)}\sqrt{h(t)} - (\alpha-1)h(t)\,.\end{align*} 
    
    Use the inequality \(-ax^{2}+bx\leq \frac{b^2}{4a},\,a>0,\,b\in \mathbb{R}\) with \(a=(\alpha-1)\), \(b = 2\sqrt{t^{2}W(t)}\) and \(x=\sqrt{h(t)}\) to obtain \begin{equation}\label{eql6} A\int_{t_{0}}^{t}sW(s)\mathrm{d}s + Bt^{2}W(t)\leq C_{0}\,,\end{equation} 
    
    where \(A\coloneqq\left(1-\frac{\delta}{\alpha}\right)\) and \(B\coloneqq\left(\frac{\delta}{2\alpha}-\frac{1}{(\alpha-1)}\right)\)\,. 

    Since \(\alpha>\delta\), for both \(K_{\mathrm{min}}\geq 0\) and \(K_{\mathrm{min}} < 0\) we have \(A,\,B\geq 0\). Thus both the terms in \eqref{eql6} are non-negative and upper bounded by a constant \(C_{0}\). Thus we infer from \eqref{eql6} that
    \begin{align}\label{eqn13} \text{sup}_{t\geq t_{0}}t^{2}W(t)&<\infty \quad \text{and} \quad \\ \label{eqn14} \int_{t_{0}}^{+\infty}sW(s)\mathrm{d}s&<\infty\,.\end{align} 
    
    From \eqref{C10}, we have 
    \begin{equation*}\left(t^{2}W(t)\right)^{\prime} = 2tW(t)-t^{2}\kappa(t)\left\|\dot{X}\right\|^{2} \leq 2tW(t) \,,\end{equation*} 
    
    which combined with \eqref{eqn14} gives us that \(\int_{t_{0}}^{\infty}\left(t^{2}W(t)\right)^{\prime}<\infty\) and therefore by \eqref{eqn13}, \(\lim_{t\to\infty} t^{2}W(t)\)  exists. 
    
    Thus, following the chain of arguments \(\mathrm{(i)-(v)}\) stated in the beginning of the proof, we have our desired result.
    \end{proof}
    
    \subsection{Convergence of trajectory when \(\alpha>\delta\)} 
    We now show that for \(\alpha>\delta\), the solution of \eqref{RODE1} converges to an element in \(\mathrm{argmin}_{\mathcal{M}}f\). 
    \begin{theorem}\label{LeviCivita} Assume \(\alpha>\delta\) in \eqref{RODE1}. Then under Assumptions \ref{Assumption1} and \ref{Assumption2}, there exists some \(\tilde{x}\in \mathrm{argmin}_{\mathcal{M}}f\) such that \(X(t)\to \tilde{x}\) as \(t\rightarrow \infty\).
    \end{theorem}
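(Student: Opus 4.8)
The plan is to adapt the Opial-type argument of \citet{attouch2018fast} to the Riemannian setting, using the energy estimates already extracted in the proof of Theorem \ref{pythagoras}. The key new object is the anchor function $h_z(t) := \tfrac{1}{2}d(X(t),z)^2$ for an arbitrary $z \in \mathrm{argmin}_{\mathcal{M}} f$. Recall from \eqref{Prelim4} and \eqref{eqn1001} that $h_z''(t) + \kappa(t) h_z'(t) = \langle \nabla_{\dot X}\mathrm{Log}_{X(t)} z, -\dot X\rangle + \langle \mathrm{Log}_{X(t)}z, \mathrm{grad} f(X(t))\rangle$; bounding the first term above by $\zeta\|\dot X\|^2$ via \eqref{Noidea3} and the second by $f^\star - f(X(t)) \le 0$ via geodesic convexity, we get $t h_z''(t) + \alpha h_z'(t) \le \zeta t \|\dot X(t)\|^2$. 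Multiplying the defining identity $W'(t) = -\tfrac{\alpha}{t}\|\dot X\|^2$ from \eqref{1111} shows $\int_{t_0}^\infty t\|\dot X(t)\|^2\,dt < \infty$ (this also follows by integrating and using boundedness of $W$), and together with $\int_{t_0}^\infty s W(s)\,ds < \infty$ from \eqref{eqn14} — which controls $\int_{t_0}^\infty s\, d(X(s),z)^2$-free energy terms — one deduces that the function $t \mapsto t h_z'(t) + \alpha h_z(t)$ has an integrable positive part, hence $\liminf$ control. The standard trick: set $p(t) := t^{1-\alpha}\int_{t_0}^t s^{\alpha-1}(\text{something})$, or more simply observe $\frac{d}{dt}\big(t^\alpha h_z'(t)\big) = t^{\alpha-1}(t h_z'' + \alpha h_z') \le \zeta t^\alpha \|\dot X\|^2$, integrate to bound $h_z'(t)^+$, and conclude that $\int_{t_0}^\infty \frac{[h_z'(t)]^+}{t}\,dt < \infty$ using $\alpha > \delta > 1$; this yields that $\lim_{t\to\infty} h_z(t)$ exists for every fixed $z \in \mathrm{argmin}_{\mathcal{M}} f$ (the Opial condition, part one).

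Next I would establish the second Opial ingredient: every cluster point of $X(t)$ as $t\to\infty$ lies in $\mathrm{argmin}_{\mathcal{M}} f$. From Theorem \ref{pythagoras} we have $t^2(f(X(t)) - f^\star) \to 0$, so in particular $f(X(t)) \to f^\star$; since $X(t)$ stays in the bounded geodesically convex set $\mathcal{C}$ (Assumption \ref{Assumption2}(iv)) and $\mathcal{C}$ has compact closure (bounded and closed in a complete manifold, by Hopf–Rinow), cluster points exist, and continuity of $f$ forces any cluster point $\tilde x$ to satisfy $f(\tilde x) = f^\star$, i.e.\ $\tilde x \in \mathrm{argmin}_{\mathcal{M}} f$.

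Finally I would run the Riemannian Opial lemma to combine the two facts. Let $\tilde x$ be a cluster point, so $X(t_n) \to \tilde x$ along some sequence $t_n \to \infty$. Since $\tilde x \in \mathrm{argmin}_{\mathcal{M}}f \subset \mathcal{C}$, the limit $\ell := \lim_{t\to\infty} h_{\tilde x}(t) = \lim_{t\to\infty}\tfrac12 d(X(t),\tilde x)^2$ exists by part one; evaluating along $t_n$ gives $\ell = 0$, hence $d(X(t),\tilde x) \to 0$, i.e.\ $X(t) \to \tilde x$. Setting this common limit as the claimed point finishes the proof.

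The main obstacle I anticipate is the first step — showing $\lim_{t\to\infty} h_z(t)$ exists. In the Hilbert-space proof of \citet{attouch2018fast} this rests on the identity $\langle X - z, \nabla f(X)\rangle = \langle X-z, -\ddot X - \kappa \dot X\rangle$ together with convexity and an ODE lemma (if $w \ge 0$, $w' $ bounded appropriately, $\int (w')^+ /t < \infty$ then $w$ converges); transporting this requires care because $\langle \nabla_{\dot X}\mathrm{Log}_{X(t)}z, -\dot X\rangle$ is only controlled by $\zeta\|\dot X\|^2$ with $\zeta \ge 1$ rather than equal to $\|\dot X\|^2$, which is precisely where the hypothesis $\alpha > \delta = 2\zeta + 1$ (rather than $\alpha > 3$) must be used to keep the relevant coefficients positive when integrating. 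One must check that the curvature-inflated constant $\zeta$ propagates consistently through the bound on $\frac{d}{dt}(t^\alpha h_z')$ and that the resulting integrability $\int_{t_0}^\infty t^{-1}[h_z'(t)]^+ dt < \infty$ still holds; the integrability $\int s W(s)\,ds < \infty$ and $\int s\|\dot X\|^2\,ds < \infty$ from Theorem \ref{pythagoras}'s proof are exactly the ingredients that make this go through.
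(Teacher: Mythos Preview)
Your approach is correct and essentially identical to the paper's: the device $\frac{d}{dt}\big(t^\alpha h_z'\big) = t^\alpha\big(h_z'' + \tfrac{\alpha}{t}h_z'\big)$ is precisely the integrating-factor multiplication by $e^{\Psi(t,t_0)} = (t/t_0)^\alpha$ that the paper uses, after which one integrates, applies Fubini with $\int_s^\infty t^{-\alpha}\,dt = s^{1-\alpha}/(\alpha-1)$, and invokes $\int s\|\dot X(s)\|^2\,ds<\infty$ from \eqref{eqn14}; the Opial-type endgame (cluster points lie in $\mathrm{argmin}_\mathcal{M} f$ by Hopf--Rinow plus $f(X(t))\to f^\star$, then full convergence from $\lim_{t\to\infty} h_{\tilde x}(t)$ existing) matches the paper exactly. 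Two small corrections: what the Fubini computation actually yields, and what you need to conclude that $\lim h_z(t)$ exists, is $\int_{t_0}^\infty [h_z'(t)]^+\,dt<\infty$ (not the weaker $\int [h_z']^+/t$ you wrote); and the hypothesis $\alpha>\delta$ is not used in the integrating-factor step itself (only $\alpha>1$ is needed there) --- it enters upstream, in the proof of Theorem \ref{pythagoras}, to secure the integrability $\int s\|\dot X(s)\|^2\,ds<\infty$ on which the whole argument rests.
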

    
    \begin{proof} We come back to \eqref{eqn1001}. For any \(z\in\mathrm{argmin}_{\mathcal{M}}f\), using geodesic convexity of \(f\), we apply \eqref{SJ2} and the fact that \(f^{\star}-f(X(t)) \leq 0\) to \eqref{eqn1001} and obtain, 
    \begin{align}\label{1002} h^{\prime\prime}(\tau)+\kappa(\tau)h^{\prime}(\tau) &\leq \xi\left(d\left(X(\tau),z\right)\right)\left\|\dot{X}(\tau)\right\|^{2}\leq \delta\left\|\dot{X}(\tau)\right\|^{2} \,,\end{align} 
    
    since \(\xi\left(d\left(X(\tau),z\right)\right)<2\xi\left(d\left(X(\tau),z\right)\right)+1\leq \delta\). 
    
    Multiply both sides of \eqref{1002} by \(e^{\Psi(\tau, t_{0})}\) where \(\Psi(\tau, t_{0})=\int_{t_{0}}^{\tau}\kappa(u)\mathrm{d}u\) is the integrating factor, and integrate from \(t_{0}\,\text{to}\, t\). Using standard integration by parts technique and the Fundamental Theorem of Calculus, we obtain, 
    \begin{equation}\label{eq1003} h^{\prime}(t) \leq e^{-\Psi(t,t_{0})} h^{\prime}(t_{0}) + \int_{t_{0}}^{t}e^{-\Psi(t,\tau)} \delta\left\|\dot{X}(\tau)\right\|^{2}\mathrm{d}\tau\,,\end{equation} 

    where \(-\Psi(t,\tau)=\Psi(\tau,t_{0})-\Psi(t,t_{0})\). 
    
    Since \(\kappa(t) = \frac{\alpha}{t}\) a simple integral evaluation gives 
    \begin{equation}\label{eq1004}  \int_{s}^{\infty} e^{{-\Psi(t,s)}}\mathrm{d}t = \frac{s}{\alpha-1},\quad \forall \,s\geq t_{0}\,.\end{equation}
    
    Further integrating \eqref{eq1003} over \([t_{0},\infty)\), we make use of \eqref{eq1004} to obtain 
    \begin{equation} \label{SJ10} \int_{t_{0}}^{\infty}h^{\prime}(t)\mathrm{d}t \leq \frac{t_{0}}{\alpha-1} \left|h^{\prime}(t_{0})\right| + \int_{t_{0}}^{\infty}\int_{t_{0}}^{t}e^{-\Psi(t,\tau)} \delta\left\|\dot{X}(\tau)\right\|^{2}\mathrm{d}\tau\mathrm{d}t\,.\end{equation} 
     
    Now, upon carefully rearranging the domain of integration and subsequently applyling the Fubini's Theorem for double integrals to switch the order of integration we obtain, 
    \begin{equation} \label{SJ11} \int_{t_{0}}^{\infty}h^{\prime}(t)\mathrm{d}t \leq \frac{t_{0}}{\alpha-1} \left|h^{\prime}(t_{0})\right| + \int_{t_{0}}^{\infty}\int_{\tau}^{\infty}e^{-\Psi(t,\tau)} \delta\left\|\dot{X}(\tau)\right\|^{2}\mathrm{d}t\mathrm{d}\tau\,.\end{equation}

    Using \eqref{eq1004} for the inner integral in \eqref{SJ11} we obtain,
    \begin{equation} \label{eq1005} \int_{t_{0}}^{\infty}h^{\prime}(t)\mathrm{d}t \leq \frac{t_{0}}{\alpha-1} \left|h^{\prime}(t_{0})\right| + \frac{1}{\alpha-1}\int_{t_{0}}^{\infty}\tau \delta\left\|\dot{X}(\tau)\right\|^{2}\mathrm{d}\tau\,.\end{equation} 
    
    Finally, from \eqref{eqn14}, the right side of \eqref{eq1005} is finite and we have that \(\int_{t_{0}}^{\infty} h^{\prime}(t)\mathrm{d}t <\infty\) which implies \(\lim_{t\to\infty}h(t)\) exists. Thus we have, 
    \begin{equation}\label{oo1} \lim_{t\to\infty}d\left(X(t),z\right)\,\text{exists for every}\, z\in \mathrm{argmin}_{\mathcal{M}}f\,.\end{equation} 
    
    Up to this stage, we only know that the limit in \eqref{oo1} exists and could be non-zero. Since the trajectory remains bounded, as a consequence of the Hopf--Rinow Theorem for complete Riemannian manifolds, there exists a subsequence \(X(t_{k})_{k\in\mathbb{N}}\) whose accumulation point is say \(\tilde {x}\in \mathcal{C}\subset\mathcal{M}\) \citep{munier2007steepest}. By continuity of \(f\), \(f(\tilde{x}) = f(\lim_{k\to +\infty} X(t_k)) = \lim_{k\to +\infty} f(X(t_k)) = \lim_{t\to +\infty} f(X(t)) = f^\star\), i.e. \(\tilde{x} \in \mathrm{argmin}_\mathcal{M} f\). Now, since \eqref{oo1} holds for every \(z\in\mathrm{argmin}_{\mathcal{M}}f\), in particular it holds for \(\tilde{x}\). This implies \(d\left(X(t_{k}),\tilde{x}\right)\to 0\). However, by uniqueness of limit for the function, \(d\left(X(t),\tilde{x}\right)\to 0\) and therefore \(X(t)\rightarrow \tilde{x}\) (we emphasize that this conclusion is independent of the choice of subsequence \(t_{k}\)).

    \end{proof} 
    
    \subsection{Convergence rate for the sub-critical case \(0<\alpha\leq\delta\)}
    
    In this section we analyze continuous-time dynamics for \eqref{RODE1} for \(0<\alpha\leq \delta\). In the Hilbert space, this has been analyzed in \cite{attouch2019rate} in the continuous-time setting while \cite{apidopoulos2020convergence} analyzed this in the discrete setting. We obtain a similar result for a Riemannian manifold with lower bounded sectional curvature \(K_{\mathrm{min}}\). 
    
    \begin{theorem}\label{Fermat}Assume \(0<\alpha\leq\delta\) in \eqref{RODE1}. Then under Assumptions \ref{Assumption1} and \ref{Assumption2}, any solution \(X\) of \eqref{RODE1} satisfies 
    \begin{equation*}f(X(t))-\mathrm{min}_{\mathcal{M}} f =O\left(\frac{1}{t^{\frac{2\alpha}{\delta}}}\right)\,.\end{equation*}
    \end{theorem}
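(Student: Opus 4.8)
\emph{Approach.} The plan is to run the subcritical, single--Lyapunov--function argument of \citet{attouch2019rate}, with the Euclidean constant $3$ replaced by the curvature constant $\delta$ from \eqref{Noidea1}. Fix $z\in\mathrm{argmin}_{\mathcal M}f$, write $f^{\star}:=\min_{\mathcal M}f$, and keep the functions $W,h$ from the proof of Theorem~\ref{pythagoras}. Two ingredients are already available there: $W'(t)=-\tfrac{\alpha}{t}\|\dot X(t)\|^{2}$ by \eqref{1111}, so $W$ is nonincreasing; and combining \eqref{eqn1001} with the curvature bound \eqref{Noidea3} and geodesic convexity \eqref{SJ2} applied at $z$ gives
\begin{equation*} h''(t)+\tfrac{\alpha}{t}h'(t)\le \zeta\,\|\dot X(t)\|^{2}-\bigl(f(X(t))-f^{\star}\bigr),\qquad \zeta=\tfrac{\delta-1}{2}. \end{equation*}
Set $b:=\tfrac{2\alpha}{\delta}$ and $\mu:=b(\alpha+1-b)$, fix any $t_{0}>0$, and define on $[t_{0},\infty)$
\begin{equation*} \mathcal E(t):=t^{b}W(t)+b\,t^{b-1}h'(t)+\mu\,t^{b-2}h(t); \end{equation*}
here $h\in C^{2}(0,\infty)$ because the exponential map is a global diffeomorphism, so \eqref{Prelim3}--\eqref{Prelim4} hold.

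\emph{Step 1: $\mathcal E$ is nonincreasing.} Differentiate $\mathcal E$, substitute $W'$, and bound $b\,t^{b-1}h''(t)$ from above with the displayed inequality (legitimate since $b>0$). The terms proportional to $t^{b-1}(f(X(t))-f^{\star})$ cancel because the coefficient of $h'$ in $\mathcal E$ was chosen to be $b$; the terms proportional to $t^{b-2}h'(t)$ cancel by the choice of $\mu$; and the coefficient of $t^{b-1}\|\dot X(t)\|^{2}$ equals $\tfrac b2-\alpha+b\zeta=\tfrac{b\delta}{2}-\alpha=0$ by the choice of $b$. What remains is $\mathcal E'(t)\le \mu(b-2)\,t^{b-3}h(t)\le 0$, using $b\le 2$ (equivalent to $\alpha\le\delta$), $\mu\ge 0$ (since $b\le\tfrac{2\alpha}{3}<\alpha$, hence $\alpha+1-b>0$), and $h\ge 0$. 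Therefore $\mathcal E(t)\le\mathcal E(t_{0})<\infty$ for all $t\ge t_{0}$.

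\emph{Step 2: $\mathcal E$ dominates $t^{b}(f-f^{\star})$.} Using $\langle\mathrm{Log}_{X(t)}z,\dot X(t)\rangle=-h'(t)$ and $\|\mathrm{Log}_{X(t)}z\|^{2}=2h(t)$ one gets the algebraic identity
\begin{equation*} \tfrac12\bigl\|\,b\,t^{b/2-1}\mathrm{Log}_{X(t)}z-t^{b/2}\dot X(t)\,\bigr\|^{2}=b^{2}t^{b-2}h(t)+b\,t^{b-1}h'(t)+\tfrac12 t^{b}\|\dot X(t)\|^{2}, \end{equation*}
so, expanding $W=\tfrac12\|\dot X\|^{2}+(f(X)-f^{\star})$,
\begin{equation*} \mathcal E(t)=t^{b}\bigl(f(X(t))-f^{\star}\bigr)+\tfrac12\bigl\|\,b\,t^{b/2-1}\mathrm{Log}_{X(t)}z-t^{b/2}\dot X(t)\,\bigr\|^{2}+(\mu-b^{2})\,t^{b-2}h(t)\ \ge\ t^{b}\bigl(f(X(t))-f^{\star}\bigr), \end{equation*}
provided $\mu\ge b^{2}$. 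Combining the two steps yields $t^{2\alpha/\delta}\bigl(f(X(t))-f^{\star}\bigr)\le\mathcal E(t_{0})$ for all $t\ge t_{0}$, which is the claimed $O(t^{-2\alpha/\delta})$ rate.

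\emph{Main obstacle.} The one genuinely delicate point is the last sign condition $\mu\ge b^{2}$, i.e. $2b\le\alpha+1$, i.e. $\tfrac{4\alpha}{\alpha+1}\le\delta$, to be verified under only $0<\alpha\le\delta$. Since $\alpha\mapsto\tfrac{4\alpha}{\alpha+1}$ is increasing and $\alpha\le\delta$, it suffices that $\tfrac{4\delta}{\delta+1}\le\delta$, which is exactly $\delta\ge 3$ --- precisely the property recorded in observations (ii)--(iii) after \eqref{Noidea1}; so the curvature constant entering the rate automatically sits in the regime where the Lyapunov bookkeeping closes. The rest is routine algebra, and, as in Theorem~\ref{pythagoras}, the argument only uses \eqref{RODE1} together with the a priori bounds, so it applies to every (possibly non-unique) solution; the borderline case $\alpha=\delta$ ($b=2$) recovers the $O(t^{-2})$ rate of \citet{alimisis2020continuous} as a limit.
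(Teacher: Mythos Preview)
Your proof is correct and is essentially the paper's argument: once rewritten via the identity in your Step~2, your $\mathcal E(t)$ coincides exactly with the paper's Lyapunov function $A+B+C$ for $p=b/2=\alpha/\delta$, $\lambda(t)=2pt^{p-1}$, $\eta(t)=2p(\alpha-4p+1)t^{2p-2}$, and your sign checks $b\le 2$ and $\mu\ge b^{2}$ are precisely the paper's constraints $p\le 1$ and $\alpha\ge 4p-1$. The only presentational difference is that you fix $b=2\alpha/\delta$ at the outset (forcing the kinetic coefficient to vanish) and then verify the remaining constraints, whereas the paper leaves $p$ free, derives all three constraints, and argues afterwards that $p=\min\bigl(1,\alpha/\delta,(\alpha+1)/4\bigr)=\alpha/\delta$.
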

    
    \begin{proof} We fix a \(z\in \mathrm{argmin}_{\mathcal{M}}f\), define \(f^{\star}\coloneqq \mathrm{min}_{\mathcal{M}}f\) and consider the function \(\map{W}{[t_{0},\infty)}{[0,\infty)}\) given as 
    \begin{equation}\label{1eq1} W(t)= A(t)+B(t)+C(t),\end{equation} 
    where 
    \begin{equation*}\begin{split} A(t) &\coloneqq t^{2p}\left(f(X(t))-f^{\star}\right), \\ B(t)&\coloneqq\frac{1}{2}\left\|-\lambda(t)\left(\mathrm{Log}_{X(t)}z\right)+t^{p}\dot X(t)\right\|^{2}\,, \\ C(t)&\coloneqq =\frac{\eta(t)}{2}d\left(X(t),z\right)^{2}=\frac{\eta(t)}{2}\left\|\mathrm{Log}_{X(t)}z\right\|^{2}\,,\end{split}\end{equation*} 

    and \(p\) is a positive real number, \(\lambda\) and \(\eta\) are positive functions that will be chosen appropriately so as to make the energy function \(W(t)\) non-increasing. 
    
    Then their derivatives are given as
    \begin{align*}\begin{split} A^{\prime}(t) &= 2pt^{2p-1}\left[f(X(t))-f^{\star}\right] + t^{2p}\left\langle \mathrm{grad} f(X(t)),\dot X(t)\right\rangle\,,\\ B^{\prime}(t)&=\left\langle -\lambda(t)\mathrm{Log}_{X(t)}z+t^{p}\dot{X}(t), -\dot{\lambda}(t)\mathrm{Log}_{X(t)}z-\lambda(t)\nabla_{\dot{X}(t)} \mathrm{Log}_{X(t)}z+pt^{p-1}\dot{X}(t)+t^{p}\nabla\dot{X}(t)\right\rangle\,,\\ C^{\prime}(t)&= \frac{\dot{\eta}(t)}{2}\left\|\mathrm{Log}_{X(t)}z\right\|^{2} + \eta(t)\left\langle -\dot{X}(t),\mathrm{Log}_{X(t)}z\right\rangle\,.\end{split}\end{align*}

    We make use of \eqref{RODE1} and write \(B^{\prime}(t)\) as
    \begin{align*}\begin{split}
    B^{\prime}(t)= \Big\langle -\lambda(t)\mathrm{Log}_{X(t)}z+t^{p}\dot{X}(t), -\dot{\lambda}(t)\mathrm{Log}_{X(t)}z-\lambda(t)\nabla_{\dot{X}(t)}\mathrm{Log}_{X(t)}z+pt^{p-1}\dot{X}(t) \\+t^{p-1}(-\alpha)\dot{X}(t)+t^{p}(-\mathrm{grad} f(X(t)))\Big\rangle.\end{split}\end{align*} 
    
    Adding the derivatives of \(A\), \(B\) and \(C\) gives an expression for \({W}^{\prime}(t)\). In order to avoid clutter, we avoid writing the complete expression for \({W}^{\prime}(t)\). Instead, we make some observations about certain terms that appear in that expression that allows us to find an upper bound for \({W}^{\prime}(t)\). 

    Since \(d(X(t),z)^{2}=\left\|\mathrm{Log}_{X(t)}z\right\|^{2} = \left\langle\mathrm{Log}_{X(t)}z,\mathrm{Log}_{X(t)}z\right\rangle\), we have
    \begin{equation*} \frac{1}{2}\frac{\mathrm{d}}{\mathrm{d}t}d\left(X(t),z\right)^{2} = \frac{1}{2}\frac{\mathrm{d}}{\mathrm{d}t}\left\|\mathrm{Log}_{X(t)}z\right\| = \frac{1}{2}\frac{\mathrm{d}}{\mathrm{d}t} \left\langle\mathrm{Log}_{X(t)}z,\mathrm{Log}_{X(t)}z\right\rangle\,.\end{equation*}
    
    Using properties of covariant derivatives, see \cite{Tu1}[Theorem 13.2], we obtain, 
    \begin{equation}\label{abc2} \left\langle \mathrm{Log}_{X(t)}z,\nabla_{\dot{X}(t)}\mathrm{Log}_{X(t)}z\right\rangle = \frac{1}{2}\frac{\mathrm{d}}{\mathrm{d}t}d\left(X(t),z\right)^{2} = \left\langle \mathrm{Log}_{X(t)}z,-\dot{X}(t)\right\rangle\,. \end{equation} 

    Using \eqref{SJ2}, \eqref{Noidea3} and \eqref{abc2} in the expression for the derivative of \(W^{\prime}(t)\) gives us the following bound 
    \begin{multline}\label{1eq11} W^{\prime}(t)\leq t^{p}\left[2pt^{p-1}-\lambda(t)\right]\left(f(X(t)-f^{\star}\right) \\+ \left[\eta(t) + t^{p}\dot{\lambda}(t)-\lambda(t)(\alpha-p)t^{p-1}+\lambda(t)^2\right]\left\langle \mathrm{Log}_{X(t)}z,-\dot{X}(t)\right\rangle \\ -t^{p}\left[(\alpha-p)t^{p-1}-\lambda(t)\zeta\right]\left\|\dot{X}(t)\right\|^2 + \left[\lambda(t)\dot{\lambda}(t)+\frac{\dot{\eta}(t)}{2}\right]\left\|\mathrm{Log}_{X(t)}z\right\|^{2}.
    \end{multline} 
    
    We choose \(\lambda\) and \(\eta\) so as to make the first two terms of \eqref{1eq11} zero. This gives 
    \begin{align}\label{C4} \lambda(t) = 2pt^{p-1} \quad \text{and} \quad \eta(t) = 2p(\alpha-4p+1)t^{2p-2}\,.\end{align} 
    
    To impose that \(\eta\) is non-negative, we impose the condition 
    \begin{equation}\label{C1} \alpha\geq 4p-1\,.\end{equation} 
    
    For \(W\) to be a non-increasing function, we will impose the condition that \((\alpha-p)t^{p-1}-\lambda(t)\zeta\geq 0\) or equivalently 
    \begin{equation}\label{C2} \alpha\geq (2\zeta+1)p =\delta p\,,\end{equation} 
    
    where \(\delta\) is as defined in \eqref{Noidea1}. For the choice of \(\lambda\) and \(\eta\) as in \eqref{C4}, we have 
    \begin{equation*} \lambda(t)\dot{\lambda}(t)+\frac{\dot{\eta}(t)}{2}= -2p(1-p)(\alpha-2p+1)t^{2p-3}\,.\end{equation*} 
    
    which is non-positive if we impose the condition 
    \begin{equation}\label{C5} p\leq 1\,.\end{equation} 
    
    Thus, if we choose \(p=\mathrm{min}\left(1,\frac{\alpha}{\delta},\frac{\alpha+1}{4}\right)\), then conditions \eqref{C1}, \eqref{C2} and \eqref{C5} are satisfied. This implies that \(W\) is a non-negative, non-increasing function associated with \eqref{RODE1}. As a consequence, we obtain
    \begin{equation}\label{C8} t^{2p}\left(f(X(t))-f^{\star}\right)\leq W(t)\leq W(t_{0})\,. \end{equation} 
    
    Now for the case when \(K_{\mathrm{min}} \geq 0\), by definition \(\delta=3\) and hence \(0<\alpha \leq \delta\) implies \(p=\frac{\alpha}{\delta}\). For the case when \(K_{\mathrm{min}}<0\) , we have \(\delta>3\). As a result, \(0<\alpha\leq\delta\) includes the case when \(0<\alpha<3\) and the case when \(3\leq\alpha \leq \delta\). If \(0<\alpha<3\)  then \(1>\frac{\alpha+1}{4}>\frac{\alpha}{3}>\frac{\alpha}{\delta}\) and when \(3\leq\alpha \leq\delta\) then \(\frac{\alpha+1}{4} \geq 1 \geq \frac{\alpha}{\delta}\). Thus, in general \(p=\frac{\alpha}{\delta}\). Combining this with \eqref{C8} we conclude the statement. 
    
    \end{proof} 

    \begin{corollary} As a corollary, we can combine this result with Theorem \ref{pythagoras} and obtain the consolidated rate of convergence for \(\alpha>0\) as 
    \begin{equation*}  f(X(t))-\mathrm{min}_{\mathcal{M}} f =O\left(\frac{1}{t^{p(\alpha)}}\right), \end{equation*} 
      
    where \(p(\alpha)=\mathrm{min}\left(2,\frac{2\alpha}{\delta}\right)\).
    \end{corollary}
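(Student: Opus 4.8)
The plan is to deduce this corollary purely by combining the two rate theorems via a case distinction on the size of $\alpha$ relative to the curvature-dependent constant $\delta$, with no new estimates required. Recall from the observations in Section \ref{PST} that $\delta = 2\zeta+1 \geq 3$, so $2\alpha/\delta > 2$ holds precisely when $\alpha > \delta$; consequently $p(\alpha) = \min(2, 2\alpha/\delta)$ equals $2$ on $(\delta,\infty)$ and equals $2\alpha/\delta$ on $(0,\delta]$. Both theorems are stated under Assumptions \ref{Assumption1} and \ref{Assumption2}, which are exactly the hypotheses in force here, so nothing extra needs to be imposed.

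In the regime $\alpha > \delta$ I would invoke Theorem \ref{pythagoras}, which gives the stronger conclusion $f(X(t)) - \min_{\mathcal{M}} f = o(1/t^2)$; since any function that is $o(1/t^2)$ is a fortiori $O(1/t^2)$, and $p(\alpha) = 2$ in this regime, this is exactly $O(1/t^{p(\alpha)})$. In the regime $0 < \alpha \leq \delta$ I would invoke Theorem \ref{Fermat}, which yields $f(X(t)) - \min_{\mathcal{M}} f = O(1/t^{2\alpha/\delta})$, and here $p(\alpha) = 2\alpha/\delta$, so the two statements coincide verbatim. Since the two regimes partition $(0,\infty)$, the bound $O(1/t^{p(\alpha)})$ holds for every $\alpha > 0$.

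There is genuinely no obstacle: the only points worth recording are that the two cases are exhaustive and that they are consistent at the boundary $\alpha = \delta$, where Theorem \ref{Fermat} already gives exponent $2\alpha/\delta = 2$, matching the neighbouring regime and the value $p(\delta) = 2$. Thus the piecewise rule for $p(\alpha)$ assembles into a single continuous, concave, piecewise-linear exponent, and the corollary follows immediately from Theorems \ref{pythagoras} and \ref{Fermat}.
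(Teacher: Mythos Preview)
Your proposal is correct and matches the paper's approach exactly: the paper treats this as an immediate combination of Theorems \ref{pythagoras} and \ref{Fermat} without supplying any additional argument, and your case split on $\alpha > \delta$ versus $0 < \alpha \leq \delta$ is precisely that combination spelled out. If anything, you have written more than the paper does, since the paper leaves the verification implicit.
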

    
    \begin{figure}[h!t]
    \centering
    \includegraphics[width=5in]{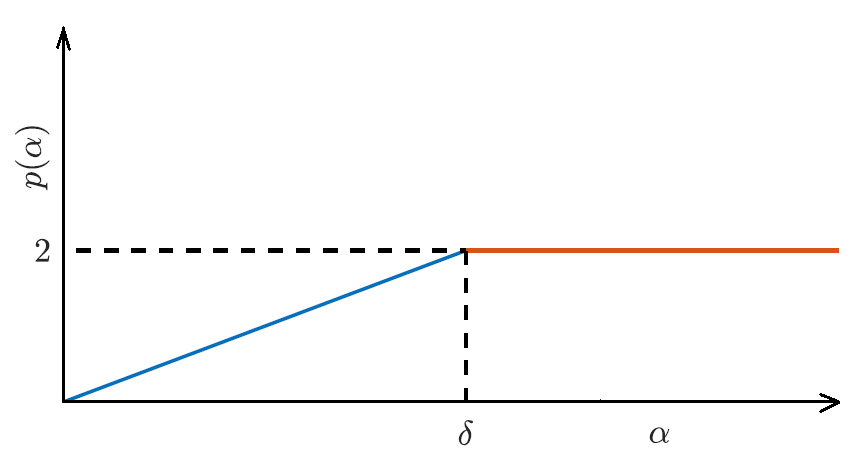}
    \caption{The convergence rate in Corollary 5.4 undergoes a phase change at \(\alpha=\delta\).}
    \label{fig1}
    \end{figure}
   
    \paragraph{Phase Transition.} This result shows a phase transition for convergence rates at \(\alpha=\delta\). For \(\alpha<\delta\), the convergence rate increases linearly with a slope of \(\frac{2}{\delta}\), whereas for \(\alpha\geq\delta\) the convergence rate remains constant. This is shown in Figure \ref{fig1} that corresponds to a similar figure in \cite{attouch2019rate} in the Euclidean case that shows how \(p(\alpha)\) varies as a function of \(\alpha\). The rate of convergence decreases as the value of \(\alpha\) decreases and this is in agreement with the previous works in the literature in the Hilbertian setting. One has then to take \(\alpha\) as large as possible but the rate stagnates at \(o(1/t^2)\) for \(\alpha\) larger than a threshold that depends on the manifold curvature.

    Additionally, we know by the work of \cite{apidopoulos2020convergence} that this rate is optimal for the whole space in the Hilbertian setting. It would be worth investigating whether a similar result holds for a class of manifolds with a given curvature. 

    \subsection{Convergence of trajectories in the sub-critical case \(0<\alpha\leq \delta\)}
    In the Euclidean case, when \(f\) is convex, convergence of solution trajectories of \eqref{ODE1} for the case \(0<\alpha\leq 3\) is still an open problem. However, convergence of the trajectory can be shown by assuming that the convex function has a strong minimum \citep{attouch2019rate}. In the Riemannian setting we can define the notion of a strong minimum of geodesically convex function \(f\) as follows.

    \begin{definition}\label{Def1} A geodesically convex function \(f\) on a Riemannian manifold has a strong minimum if there exists \(\tilde{x}\in\mathrm{argmin}_{\mathcal{M}}f\) and \(\mu>0\) such that for every \(x\in\mathcal{M}\) we have 
    \begin{equation}\label{Noidea4} f(x)\geq f(\tilde{x}) + \frac{\mu}{2}d\left(x,\tilde{x}\right)^{2}\,.\end{equation}
    \end{definition}

    As a result, the minimizer is actually unique. This is true in particular for geodesically strongly convex functions, for example, the Karcher-mean objective (see Section \ref{sec::exp} for details). 
    \begin{theorem}\label{Chandrama} Assume \(0<\alpha\leq \delta\) in \eqref{RODE1}. Then under Assumptions \ref{Assumption1} and \ref{Assumption2}, for a geodesically convex function that admits a strong minimum \(\tilde{x}\), any solution \(X\) of \eqref{RODE1} converges to \(\tilde{x}\) with the rate 
    \begin{equation} d\left(X(t),\tilde{x}\right)^{2} = O\left(\frac{1}{t^{\frac{2\alpha}{\delta}}}\right). \end{equation}  
    \end{theorem}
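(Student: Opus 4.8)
The plan is to obtain the statement directly from Theorem~\ref{Fermat} combined with the defining inequality of a strong minimum, rather than running a fresh Lyapunov analysis. First I would record two preliminary observations. Since $f$ admits a strong minimum at $\tilde x$, inequality \eqref{Noidea4} forces $\mathrm{argmin}_{\mathcal M}f=\{\tilde x\}$ (the minimizer is unique), and by Assumption~\ref{Assumption2}(iii) we have $\tilde x\in\mathcal C$; moreover $f^\star\coloneqq\min_{\mathcal M}f=f(\tilde x)$. Thus all the quantities appearing below are well defined along any solution $X$, which by Assumption~\ref{Assumption2}(iv) stays in $\mathcal C$.

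Next I would invoke Theorem~\ref{Fermat}, which applies verbatim in the present regime $0<\alpha\le\delta$ under Assumptions~\ref{Assumption1} and \ref{Assumption2}: there exist $t_0\geq 0$ and a constant $C>0$ such that $f(X(t))-f^\star\le C\,t^{-2\alpha/\delta}$ for all $t\ge t_0$. Then I would specialize the strong-minimum inequality \eqref{Noidea4} to $x=X(t)$, giving $f(X(t))\ge f(\tilde x)+\tfrac{\mu}{2}\,d(X(t),\tilde x)^2$. Rearranging this and chaining it with the bound from Theorem~\ref{Fermat} yields $\tfrac{\mu}{2}\,d(X(t),\tilde x)^2\le f(X(t))-f^\star\le C\,t^{-2\alpha/\delta}$, hence $d(X(t),\tilde x)^2\le \tfrac{2C}{\mu}\,t^{-2\alpha/\delta}=O\!\left(t^{-2\alpha/\delta}\right)$, which is exactly the claimed rate.

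Finally, since $\alpha>0$ and $\delta>0$, the exponent $2\alpha/\delta$ is strictly positive, so the right-hand side tends to $0$; therefore $d(X(t),\tilde x)\to 0$ as $t\to\infty$, i.e. $X(t)\to\tilde x$ in the metric space $(\mathcal M,d)$. This simultaneously delivers the convergence of the trajectory and the quantitative decay estimate.

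I do not expect a genuine obstacle here: the analytic work is already carried out in Theorem~\ref{Fermat}, and the strong-minimum hypothesis merely converts the objective-value rate into a distance rate via a quadratic growth lower bound, an estimate that preserves the exponent $2\alpha/\delta$ without loss. The only points needing (routine) care are that the inequality \eqref{Noidea4} is applied at the correct base point $\tilde x$, and that $\tilde x\in\mathcal C$ so that the comparison geometry underlying Theorem~\ref{Fermat} remains in force. For completeness one could alternatively give a self-contained proof by re-running the Lyapunov computation of Theorem~\ref{Fermat}, this time also using \eqref{Noidea4} to bound the term $C(t)$ from the energy, but this is strictly more laborious and produces the same exponent.
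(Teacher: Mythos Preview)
Your proposal is correct and follows essentially the same route as the paper: combine the strong-minimum inequality \eqref{Noidea4} with the objective-value rate of Theorem~\ref{Fermat} to obtain $d(X(t),\tilde x)^{2}\le \tfrac{2}{\mu}\bigl(f(X(t))-f^{\star}\bigr)=O\bigl(t^{-2\alpha/\delta}\bigr)$. The paper's proof is in fact a one-line version of exactly this argument.
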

    \begin{proof} We combine Definition \ref{Def1} with Theorem \ref{Fermat} to obtain
    \begin{equation*} d\left(X(t),\tilde{x}\right)^{2}\leq \frac{2}{\mu}\left(f(X(t))-f(\tilde{x})\right)\,, \end{equation*}
    and the result follows.
    \end{proof}
    
    \section{Numerical Experiments}\label{sec::exp}

    We provide computational evidence for the theoretical guarantees in Section \ref{MR}. In particular we would like to verify faster convergence for increasing values of \(\alpha\leq \delta\) (cf. Theorem \ref{Fermat}) and the little-o rate of convergence for \(\alpha>\delta\) (cf. Theorem \ref{pythagoras}). We consider some standard optimization problems on Riemannian manifolds of positive and negative curvature. For the positive curvature, we consider the maximum eigenvalue problem and for the negative curvature we consider the Karcher-mean problem. These problems have also been considered by \cite{sra2015conic, ferreira2019gradient, alimisis2020continuous}. We integrate the System \ref{RODE1} by employing a semi-implicit discretization as in \cite{su2014differential} and \cite{alimisis2020continuous}. A description of semi-implicit discretization can be found in the Appendix \ref{popo1}.  
    
    In order to demonstrate little-o rate of convergence for objective values, we study the progress of the term \(t^{2}\left(f(X(t))-f^{\star}\right)\), where \(X(t)\) is obtained from the semi-implicit solver while \(f^{\star}\) is a benchmark value obtained from standard libraries in Matlab. For the eigenvalue problem, \(f^{\star}\) is obtained from the Matlab eigenvalue solver whereas for the Karcher-mean problem, \(f^{\star}\) is obtained from the Manopt library \citep{boumal2014manopt}.  For \(\alpha>\delta\), theoretically we expect \(\lim_{t\to\infty} t^{2}\big[f\left(X(t)\right) - f^{\star}\big]=0\). Due to the limitations posed by finite machine precision, a fundamental difficulty in verifying little-o rate of convergence computationally is that the difference \(f(X(t))-f^{\star}\) stagnates beyond a certain stage. This allows \(t^{2}\) to overcompensate and eventually causes their product to grow. As a result, we compute the product till the difference \(f(X(t))-f^{\star}\) is within a tolerance of \(10^{-12}\).    

    \begin{figure}[h!t]
    \centering
    \includegraphics[width=5.25in]{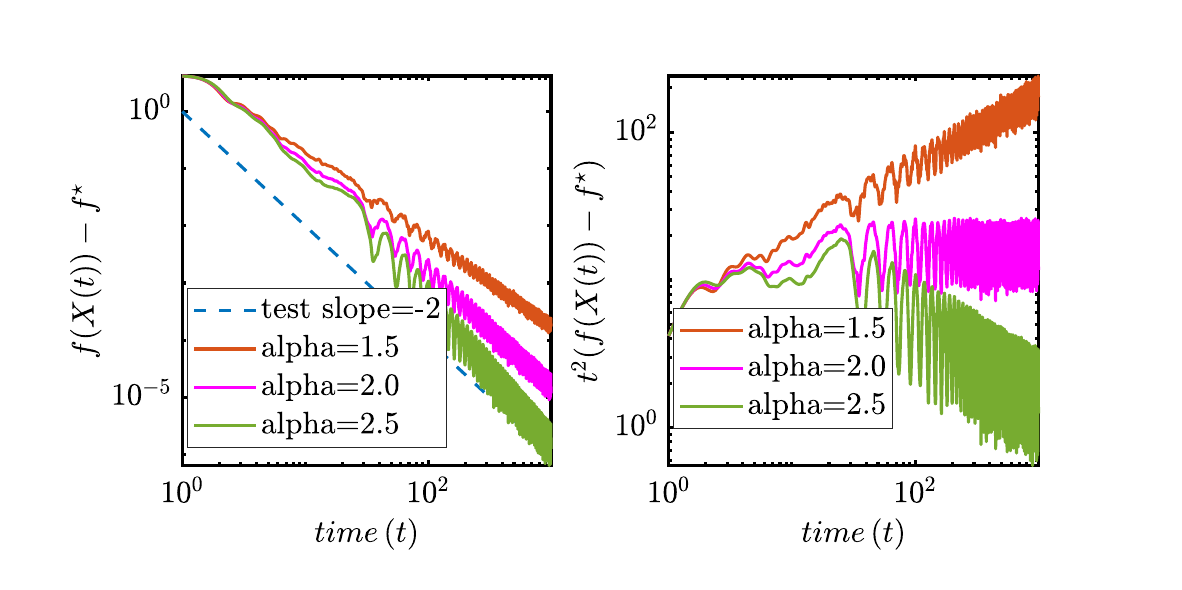}
    \caption{Convergence plots for the max--eigenvalue problem for \(0<\alpha<\delta\).}
    \label{FinPlt1}
    \end{figure}
    
    \begin{figure}[h!t]
    \centering
    \includegraphics[width=5.25in]{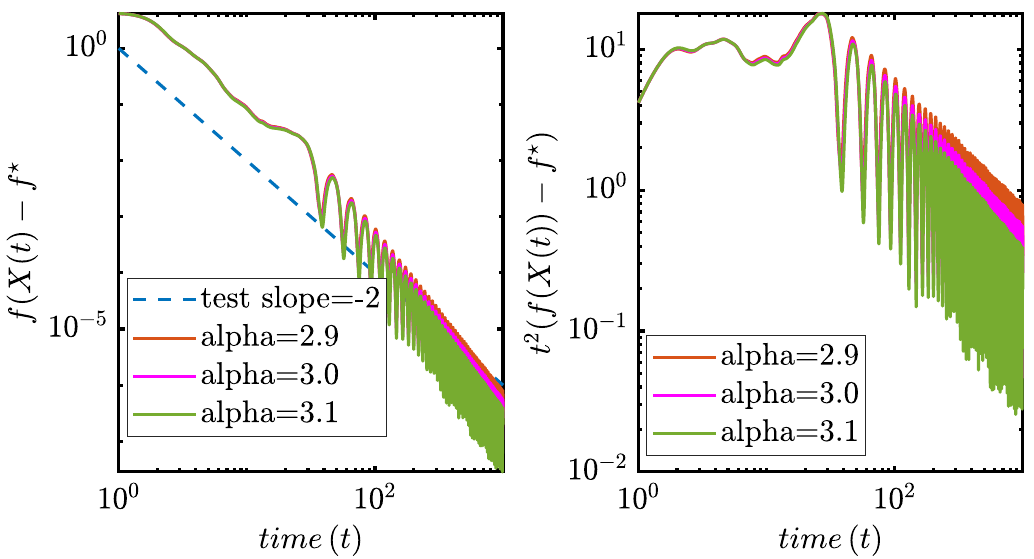}
    \caption{Convergence plots for the max--eigenvalue problem--Transition.}
    \label{FinPlt2}
    \end{figure}

    \begin{figure}[htbp]
    \centering
    \includegraphics[width=5.25in]{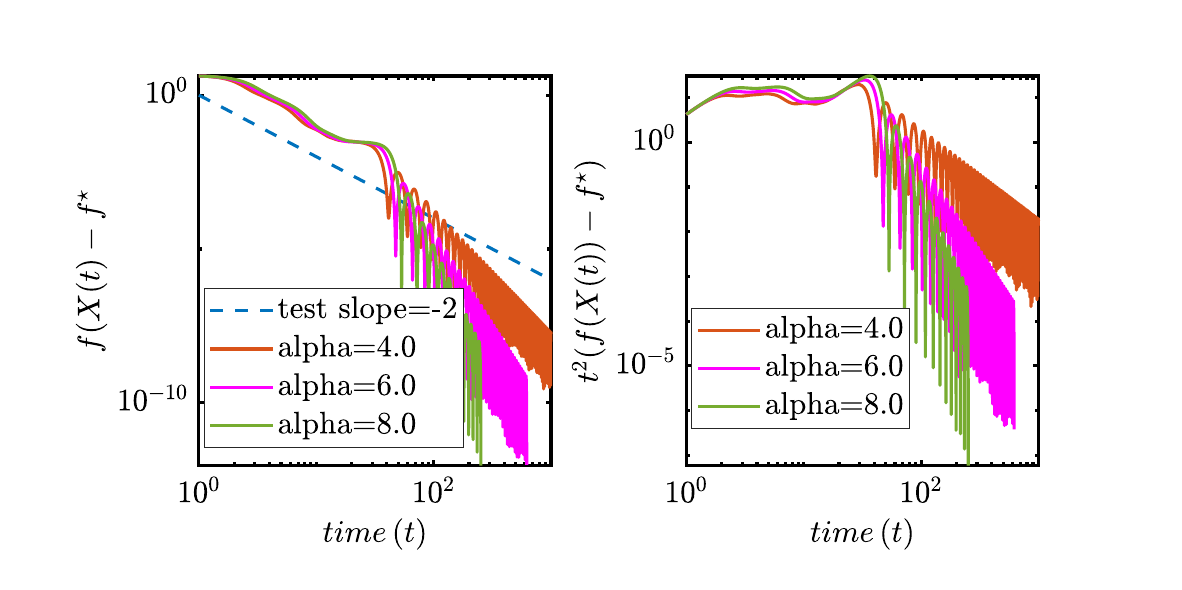}
    \caption{Convergence plots for the max--eigenvalue problem for \(\alpha>\delta\).}
    \label{FinPlt3}
    \end{figure}
    
    \paragraph{Maximum Eigenvalue Problem.} This problem aims to find the maximum eigenvalue of a symmetric positive semi-definite matrix of large condition number. This is accomplished by minimizing the negative of the Rayleigh quotient over the hemisphere. The problem is stated as follows  
    \begin{equation*} \mathrm{min}_{x\in\mathbb{S}}-0.5x^{\top}Ax\,, \end{equation*} 
    
    where \(\mathbb{S}\subset \mathbb{R}^{n}\) is the unit hemisphere. The unit hemisphere has a constant positive curvature \(K_{\mathrm{min}}=1\) and hence \(\delta=3\). We refer the reader to Appendix \ref{popo2} for expressions of exponential map, Riemannian gradient and parallel transport on the sphere.

    We generate the problem instance based on \cite{alimisis2020continuous}. For the experiment, a matrix with high condition number is generated by the formula \(A=\frac{1}{\beta}G^{\top}G\), where \(G\in \mathbb{R}^{m\times n}\,,m>n\), is a random matrix with normally distributed entries with zero mean and variance one. We choose \(m=1000\), \(n=2500\) and \(\beta=1000\). We perform the experiment for different values of \(\alpha\) with \(\alpha<\delta\) and \(\alpha\geq\delta\). Since \(\delta=3\), we perform experiments for \(\alpha=\{1.5,\,2.0,\,2.5,\,2.9,\,3.0,\,3.1,\,4.0,\,6.0,\,8.0\}\). The system is integrated for a length of time \(T=1000\) with step size \(\Delta t=0.1\).  

    The results of numerical experiments are shown in Figures \ref{FinPlt1}, \ref{FinPlt2} and \ref{FinPlt3} where we plot the progress of \(f(X(t))-f^{\star}\) and \(t^{2}\left(f(X(t))-f^{\star}\right)\) against time. While the difference \(f(X(t))-f^{\star}\) tends to zero for all the choices of \(\alpha\), we observe an improving convergence rate for increasing values of \(\alpha\). From Figure \ref{FinPlt1}, it is clear that the product \(t^{2}\left(f(X(t))-f^{\star}\right)\) does not show any decay for \(\alpha=1.5\) and \(\alpha=2.0\). However, from Figure \ref{FinPlt2} we observe that close to \(\delta=3\), the product shows a decreasing trend for \(\alpha=2.5\) and \(\alpha=2.95\) and we see a transition to a convergence rate faster than \(O\left(\frac{1}{t^{2}}\right)\). Finally, from Figure \ref{FinPlt3} for values of \(\alpha>3\) we clearly observe little-o convergence rate. 
    
    \paragraph{Karcher-mean Problem.} This problem aims to find the symmetric positive definite matrix whose sum of squares of distances from a given set of symmetric positive definite matrices is the least. The problem can be posed as a Riemannian optimization problem on the manifold of symmetric positive definite matrices (SPD-manifold) with the affine--invariant metric and is a Hadamard manifold with sectional curvature \(K\in\left[-1/2,0\right)\) \citep{criscitiello2023accelerated}.  
    \begin{equation*} \mathrm{min}_{P\in\mathbb{P}^{n}_{++}} \sum_{j=1}^{m}\left\|\mathrm{Logm}\big(P^{-1/2}A_{j}P^{-1/2}\big)\right\|^{2}_{F} \end{equation*}

    \begin{figure}[htbp]
    \centering
    \includegraphics[width=5.25in]{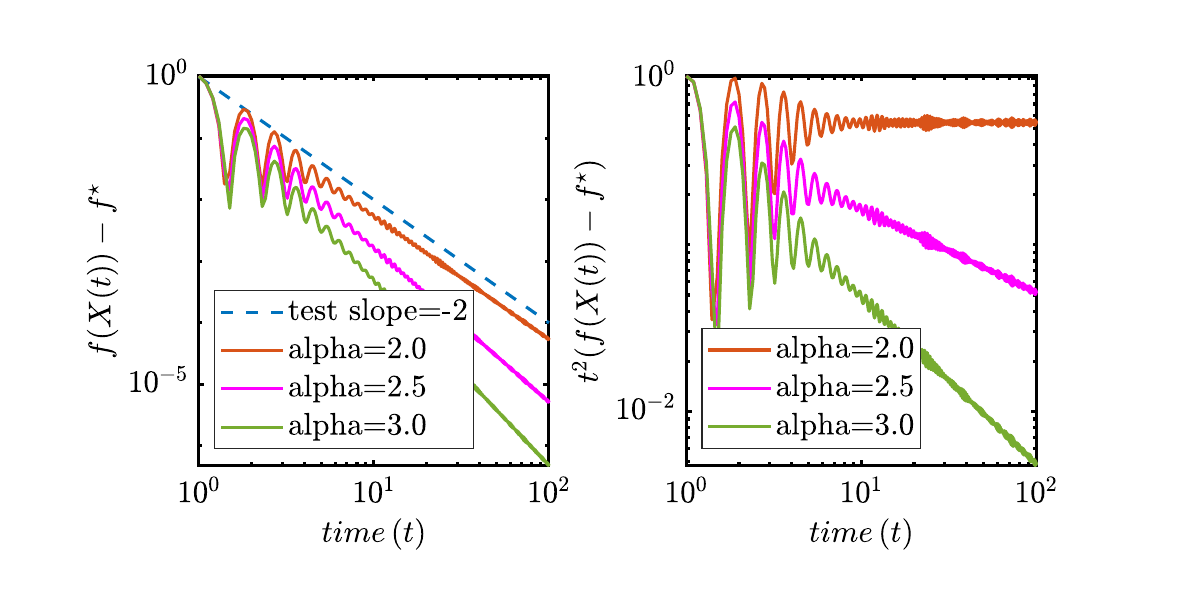}
    \caption{Convergence plots for the Karcher-mean problem for \(0<\alpha<\delta\).}
    \label{FinPlt4}
    \end{figure}

    \begin{figure}[htbp]
    \centering
    \includegraphics[width=5.25in]{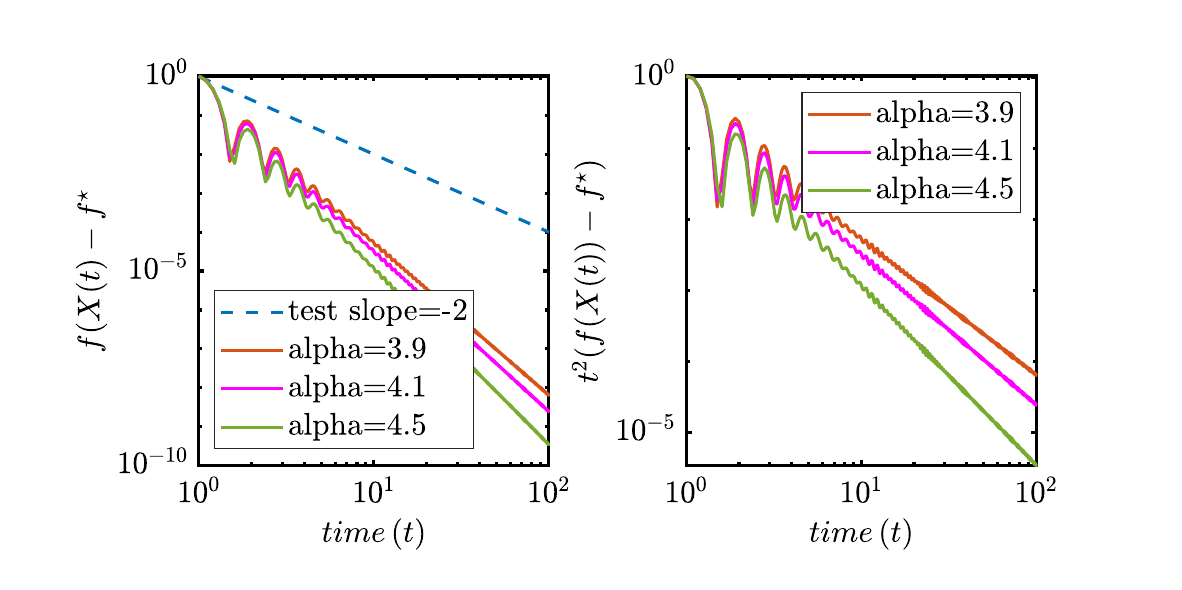}
    \caption{Convergence plots for the Karcher-mean problem--Transition.}
    \label{FinPlt5}
    \end{figure}

    \begin{figure}[htbp]
    \centering
    \includegraphics[width=5.25in]{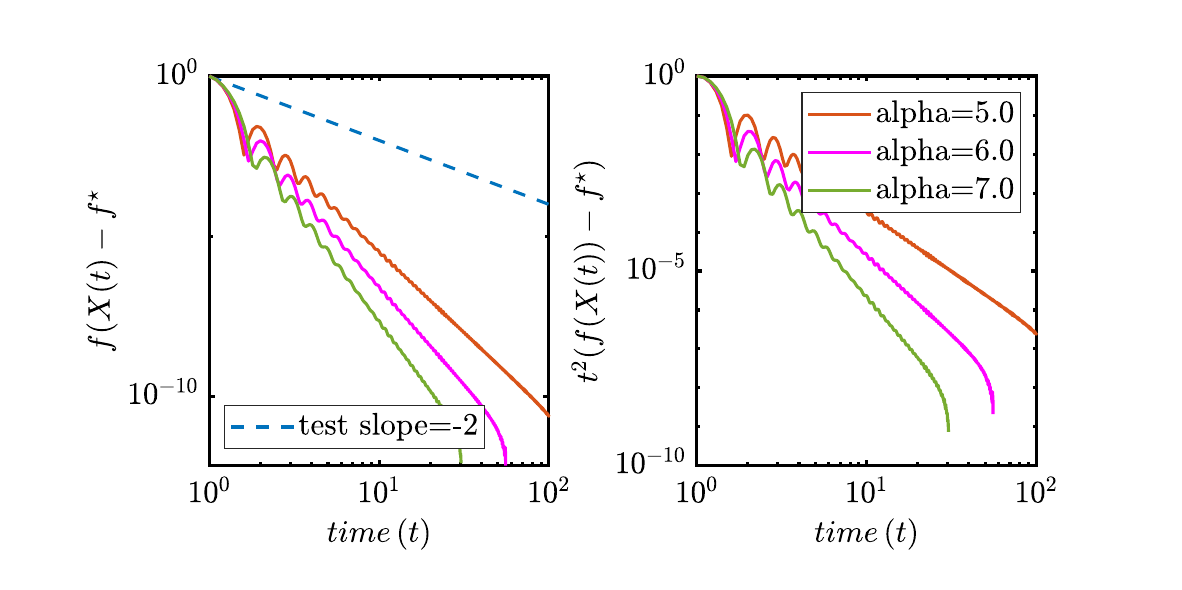}
    \caption{Convergence plots for the Karcher-mean problem for \(\alpha>\delta\).}
    \label{FinPlt6}
    \end{figure}
    
    where \(\mathbb{P}^{n}_{++}\) denotes the SPD-manifold, \(A_{1},\,\ldots,\,A_{m}\in\mathbb{P}^{n}_{++}\), \(\|\cdot\|_{F}\) denotes the Frobenius norm and \(\mathrm{Logm}\) denotes matrix logarithm. The Karcher-mean problem is Euclidean non-convex problem but Riemannian strongly convex \citep{ferreira2019gradient}. This is an important application of Riemannian optimization where a Euclidean non-convex problem can be studied and solved as a geodesically convex problem. The expressions for the exponential map, Riemannian gradient and parallel transport have been given in Appendix \ref{popo2}.
    
    For the experiment, we compute the Karcher-mean of ten randomly generated SPD-matrices \((m=10)\) of size \(n=100\). We employ the strategy proposed in \cite{ferreira2019gradient} to generate SPD-matrices for the experiment and the starting point \(X_{0}\). In order to generate the matrices, for \(j=1,\ldots,m\), we generate random orthonormal matrix \(U_{j}\) and diagonal matrix \(Q_{j}\) with eigenvalues in \((0,100)\). Then \(A_{j}=U_{j}Q_{j}U_{j}^{\top}\in\mathbb{P}^{n}_{++}\). The initial point \(P_{0}\) is given as the explog--geometric mean \(P_{0}=\mathrm{Expm}\left(\frac{1}{m}\sum_{j=1}^{m}\mathrm{Logm}(A_{j})\right)\), where \(\mathrm{Expm}\) denotes matrix exponential. 
    
    We first estimate the value of \(\delta\) for this problem. The diameter \(D\) is estimated as the distance between the optimal \(P\in\mathbb{P}_{++}^{n}\) obtained from the Manopt--solver and the initial point \(X_{0}\). We then choose \(K_{\mathrm{min}}=-0.1\). This gives \(\zeta\approx 1.59\) and \(\delta=2\zeta+1\approx 4.1\). We integrate the system for a length of time \(T=100\) with step size \(\Delta t=0.1\) and for \(\alpha<\delta\), \(\alpha=\delta\) and \(\alpha>\delta\). Since \(\delta\approx 4.1\), we perform experiments for \(\alpha=\{2.0,\,2.5,\,3.0,\,3.9,\,4.1,\,4.5,\,5.0,\,6.0,\,7.0\}\).
    
    The computational experiments agree with our theoretical results. We observe that the convergence is much faster than the previous example on the sphere which is due to the fact that the Karcher-mean problem is a geodesically strongly convex. It is evident from Figures \ref{FinPlt4}, \ref{FinPlt5} and \ref{FinPlt6} that while the difference \(f(X(t))-f^{\star}\) tends to zero for all the choices of \(\alpha\), the convergence is faster for values of \(\alpha\) increasing to \(\delta\). While we do not observe little-o rates for \(\alpha=2.0\), we observe a decreasing trend for \(t^2\left(f(X(t))-f^{\star}\right)\) for \(\alpha=2.5\) and \(\alpha=3\). We observe little-o rates for values of \(\alpha\) close to \(\delta\approx 4.1\) and for values of \(\alpha\) greater than \(\delta\) as evident from Figures \ref{FinPlt5} and \ref{FinPlt6}. The fact that we observe little-o rates for values of \(\alpha\) close to \(\delta\approx 4.1\) is due to the fact that the value of \(\delta\) we have chosen is not a tight estimate and is obtained rather heuristically. 

    Thus, we observe that for experiments performed on manifolds of both the positive as well as the negative curvature, the computational results seem to be in line with our theoretical results.    
    \section{Conclusion}\label{sec::conclusion}
 
    In this work we studied the continuous-time dynamical system to model accelerated first-order optimization algorithms on Riemannian manifolds. We have closed gaps in the convergence guarantees between the Euclidean setting and the Riemannian setting. In particular, corresponding to \(\alpha>3\) in the Euclidean setting, we show that the convergence rate for objective values is \(o\left(\frac{1}{t^{2}}\right)\) for \(\alpha>\delta\) on Riemannian manifolds. This rate is faster than the previously known rate of \(O\left(\frac{1}{t^{2}}\right)\) shown by \cite{alimisis2020continuous}. In the same setting, we also show the convergence of trajectory to an element in the set of minimizers of the objective function. We analyze the dynamical system in the sub-critical case \(0<\alpha\leq \delta\) and provide convergence rate for objective values. In this sub-critical case, we show the convergence of trajectory to a minimizer that satisfies the strong minimization property. We perform computational experiments that confirm the theoretical results.

    We end this paper with some closing comments on some aspects of accelerated dynamics on Riemannian manifolds that we encountered during this work. We note that the accelerated dynamical system that we have considered cannot be studied on some rather standard manifolds like the sphere as the exponential map is not invertible on the sphere. However, this is not necessarily a limitation. This is because the sphere is a compact manifold and the only convex function that can be defined on a compact manifold is the constant function. But in general, it would be worth investigating if the assumption that the exponential map is a diffeomorphism can be relaxed. 
    
    Another important aspect is the analysis of various discretizations of the accelerated dynamical system. While we have considered the semi-implicit discretization, some other discretizations like the explicit discretization are worth considering. It would be interesting to study whether these discretizations are equivalent to the proposed first-order accelerated algorithms on Riemannian manifolds. Additionally, it is worth exploring the tightness of the value of \(\delta\) as it is evident that the behavior resembling little-o rate of convergence starts to appear for values lesser than the value of \(\delta\) considered. Finally, the problem of convergence of trajectories for a convex function in the case \(\alpha=\delta\) is an open question even in the Euclidean case \((\alpha=3)\) for dimensions higher than one. 

	\section*{Acknowledgment}
	T. Natu, C. Castera, J. Fadili and P. Ochs are supported by the ANR-DFG joint project TRINOM-DS under the numbers ANR-20-CE92-0037-01 and OC150/5-1.

 	\newpage
	\appendix
	\begin{flushleft}
		{\LARGE \textbf{Appendices}}
	\end{flushleft}

 
    \section{Supplementaries for Theorem \ref{pythagoras}}
    \subsection{Lemma} 
    We draw attention to a rather simple and standard result that eventually allows us to prove the little-o rate. 
    
    \begin{lemma}\label{lema1} Consider two non-negative functions \(a\) and \(b\) such that \(a(t)b(t)\) has a limit \(m\geq 0\) as \(t\rightarrow\infty\). Then, if the function b(t) is integrable and \(\frac{1}{a(t)}\) is non-integrable, this means that \(m=0\).
    \end{lemma}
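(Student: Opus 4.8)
The plan is a short proof by contradiction. Suppose, contrary to the claim, that \(m>0\). The guiding intuition is that if the product \(a(t)b(t)\) converges to a strictly positive number, then for large \(t\) the factor \(b(t)\) is comparable to \(1/a(t)\); since the latter is non-integrable, this forces \(b\) to be non-integrable as well, contradicting the hypothesis.

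Concretely, I would first invoke the definition of the limit with \(\varepsilon = m/2\): there is a threshold \(T\) (which we may take \(\geq t_{0}\)) such that \(a(t)b(t)\geq m/2>0\) for all \(t\geq T\). Because \(a\) and \(b\) are non-negative and their product is bounded below by a positive constant on \([T,\infty)\), both functions are in fact strictly positive there, so \(1/a(t)\) is well-defined on \([T,\infty)\) and the bound rearranges to \(b(t)\geq \tfrac{m}{2}\,\tfrac{1}{a(t)}\) for every \(t\geq T\).

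Integrating this pointwise inequality over \([T,\infty)\) gives \(\int_{T}^{\infty} b(t)\,\mathrm{d}t \geq \tfrac{m}{2}\int_{T}^{\infty}\tfrac{1}{a(t)}\,\mathrm{d}t\). The non-integrability of \(1/a\) is unaffected by discarding the bounded initial segment \([t_{0},T]\), so the right-hand side is \(+\infty\), whence \(\int_{T}^{\infty} b(t)\,\mathrm{d}t=+\infty\). This contradicts the assumption that \(b\) is integrable, and therefore \(m=0\).

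The only point requiring a little care — though it is not a genuine obstacle — is justifying the division by \(a(t)\): one must observe that the strict positivity of the product near infinity forces \(a(t)>0\) there, and that ``non-integrable'' is insensitive to truncating a finite interval. No nontrivial estimates are involved beyond these observations, and in the application one takes \(a(t)=t\) (so \(1/a\) is non-integrable on \([t_{0},\infty)\)) and \(b(t)=W(t)\), using \eqref{eqn14} for integrability of \(b\) and \eqref{eqn13} together with the existence of \(\lim_{t\to\infty}t^{2}W(t)\) for the existence of the limit \(m\).
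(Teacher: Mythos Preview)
Your argument is correct and is essentially the same as the paper's: both proceed by contradiction, observing that if \(m>0\) then \(b(t)\geq \tilde m/a(t)\) for some \(\tilde m\in(0,m)\) and all sufficiently large \(t\), which contradicts the integrability of \(b\). Your version simply makes the choice \(\tilde m=m/2\) explicit and spells out the minor care points about \(a(t)>0\) and truncation, which the paper leaves implicit.
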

    \begin{proof} If the limit of \(a(t)b(t)\) is not zero, then \(b(t)\geq \frac{\tilde{m}}{a(t)}\), for some \(\tilde{m}\in (0,m)\) and sufficiently large \(t\), which contradicts the fact that \(b(t)\) is integrable. 
    \end{proof}

    \subsection{Derivatives of \(h\)}\label{TH1}
    The expressions for the derivatives of \(h(t)\) in \eqref{Prelim3} and \eqref{Prelim4} may appear rather abstract, therefore it is worthwhile to draw parallels with the Euclidean case. 
    
    Suppose \(\mathcal{M}=\mathbb{R}^{n}\). Then the expressions for the derivatives of \(h\) can be calculated by applying the chain rule and are given as 
    \begin{align}\label{Prelim1}h^{\prime}(t) &= \left\langle X(t)-z,\dot{X}(t)\right\rangle\,,\\\label{Prelim5} h^{\prime\prime}(t) &= \left\langle \dot{X}(t),\dot{X}(t)\right\rangle + \left\langle X(t)-z,\ddot{X}(t)\right\rangle = \left\|\dot{X}(t)\right\|^{2} + \left\langle X(t)-z,\ddot{X}(t)\right\rangle\,. \end{align} 

    In \eqref{Prelim1}, the term \(X(t)-z\) corresponds to the term \(\mathrm{Log}_{X(t)}z\) in \eqref{Prelim3}. Observe that in the Euclidean setting, \(K_{\mathrm{min}}=K_{\mathrm{max}}=0\), thus the bounds in \eqref{OMG1} are satisfied with an equality and therefore the first term in \eqref{Prelim4} equals the first term in \eqref{Prelim5}. Similarly, in the second term in \eqref{Prelim4}, the covariant derivative \(\nabla \dot{X}\) corresponds to the term \(\ddot{X}\) in \eqref{Prelim5}.

    \section{Computational Results}
    \subsection{Semi-Implicit Discretization} \label{popo1}
    In order to discretize the dynamical system, we observe that the second-order system in \eqref{RODE1} can be written in an equivalent form as a first-order system in phase space by introducing a new variable \(V\) for velocity as
    \begin{align} \label{OK1}\begin{split}\dot{X} &= V\,,\\ \nabla V &= -\frac{\alpha}{t} V - \mathrm{grad}(f(X(t)))\,.\end{split} \end{align} 

    The semi-implicit discretization is performed by taking an explicit step in the \(V\) variable using \(V_{k}\) at the point \(X_{k}\) to obtain \(\tilde{V}_{k+1}\). The position variable is updated implicitly by applying the exponential map on \(\tilde{V}_{k+1}\) at the point \(X_{k}\) to obtain \(X_{k+1}\). Then, the updated velocity \(\tilde{V}_{k+1}\) is parallel transported to \(X_{k+1}\) to obtain \(V_{k+1}\). For the system of differential equations \eqref{OK1}, this is summarized as 
    \begin{align*}\tilde{V}_{k+1} &= \left(1-\alpha\frac{\Delta t}{t_{k}}\right)V_{k} - \mathrm{grad}(f(X_{k}))\Delta t\,,\\ X_{k+1} &= \mathrm{Exp}_{X_{k}}(\tilde{V}_{k+1}\Delta t)\,, \\ V_{k+1}&= \Gamma_{X_{k}}^{X_{k+1}}\tilde{V}_{k+1}.\end{align*}
    
    where \(\Gamma_{X_{k}}^{X_{k+1}}\) denotes the parallel transport of \(\tilde{V}_{k+1}\) from the point \(X_{k}\) to \(X_{k+1}\) and \(\Delta t\) is the length of the time step.
    
    \subsection{Manifold Toolbox}\label{popo2}
    The expressions for exponential map, Riemannian gradient and parallel transport on a sphere can be found in \cite{boumal_2023} or \cite{absil2008optimization} and are summarized as 
    \begin{align*} \mathrm{grad} f(x) &= (I - xx^{\top})(-Ax) \,\,(\text{Riemannian Gradient})\,,\\\ \mathrm{Exp}_{x}(v) &= \mathrm{cos}(\left\|v\right\|)x + \mathrm{sin}(\left\|v\right\|)\frac{v}{\left\|v\right\|}\,\,(\text{Exponential Map})\,, \\ \Gamma_{x}^{y}(v) &= v - (xx^{\top})v \,\,(\text{Parallel Transport})\,.\end{align*}
    where \(I\) is an identity matrix.

    The expressions for exponential map, Riemannian gradient and parallel transport on the SPD-manifold can be found in \cite{ferreira2019gradient, gutman2023coordinate, AxenBaranBergmannRzecki:2023} and are summarized as follows  
    \begin{align*} \mathrm{grad} f(P) &= \sum_{i=1}^{m}X^{\frac{1}{2}}\mathrm{Logm}(P^{\frac{1}{2}}A_{i}^{-1}P^{\frac{1}{2}})P^{\frac{1}{2}}\,\,(\text{Riemannian Gradient})\\\ \mathrm{Exp}_{P}(V) &= P^{\frac{1}{2}}\mathrm{Expm}\left(P^{-\frac{1}{2}}VP^{-\frac{1}{2}}\right)P^{\frac{1}{2}}\,\,(\text{Exponential Map}) \\ \Gamma_{P}^{\tilde{P}}(V) &= (\tilde{P}P^{-1})^{\frac{1}{2}}V(P^{-1}\tilde{P})^{\frac{1}{2}}\,\,(\text{Parallel Transport}).\end{align*} 

    where \(\mathrm{Expm}\) and \(\mathrm{Logm}\) denote the matrix exponential and logarithm. 

    \bibliographystyle{apalike}
    \bibliography{bibliography.bib}
    
\end{document}